\numberwithin{equation}{section}
\newtheorem{theorem}{Theorem}[section]
\newtheorem{corollary}[theorem]{Corollary}
\newtheorem{lemma}[theorem]{Lemma}
\theoremstyle{definition}
\newtheorem{definition}[theorem]{Definition}
\theoremstyle{remark}
\newtheorem{remark}[theorem]{Remark}
\newtheorem{example}{Example}
\numberwithin{equation}{section}
\DeclareMathOperator{\RE}{Re}
\DeclareMathOperator{\IM}{Im}
\begin{document}
	
	\title[\tiny{On a Subclass of Starlike Functions Associated with a Strip Domain}]{On a Subclass of Starlike Functions Associated with a Strip Domain}
	
	\author[S. S. Kumar]{S. Sivaprasad Kumar}
	\address{Department of Applied Mathematics, Delhi Technological University, Delhi--110042, India}
	\email{spkumar@dce.ac.in}

    	\author[N. Verma]{Neha Verma*}
	\address{Department of Applied Mathematics, Delhi Technological University, Delhi--110042, India}
	\email{nehaverma1480@gmail.com}

	\subjclass[2010]{30C45, 30C50}
	
	\keywords{Starlike function, Strip domain, Coefficient estimate, Hankel determinant}
	\maketitle
	\begin{abstract}
In the present investigation, we introduce a new subclass of starlike functions defined by 
$\mathcal{S}^{*}_{\tau}:=\{f\in \mathcal{A}:zf'(z)/f(z) \prec 1+\arctan z=:\tau(z)\}$, where $\tau(z)$ maps the unit disk $\mathbb {D}:= \{z\in \mathbb{C}:|z|<1\}$ onto a strip domain.
We derive structural formulae, growth and distortion theorems for $\mathcal{S}^{*}_{\tau}$. Also, inclusion relations with some well-known subclasses of $\mathcal{S}$ are established and obtain sharp radius estimates, as well as sharp coefficient bounds for the initial five coefficients and the second and third order Hankel determinants of $\mathcal{S}^{*}_{\tau}$.
\end{abstract}
\maketitle
	
\section{Introduction}
\label{intro}

Let $\mathcal {A}$ denote the class of normalized analytic functions $f(z)$ defined in the open unit disk $\mathbb {D}:= \{z\in \mathbb{C}:|z|<1\}$, having the form
\begin{equation}
f(z) = z+\sum_{n=2}^{\infty}a_nz^n\label{form}
\end{equation}and $\mathcal {S}$ be the subclass of $\mathcal {A}$ consisting of univalent functions. Let us consider a subclass $\mathcal{P}$ consisting of functions with positive real part having the series expansion of the form $p(z)=1+\sum_{n=1}^{\infty}p_n z^n$. Let $g,h$ be two analytic functions and $\omega$ be a Schwarz function satisfying $\omega(0)=0$ and $|\omega(z)|\leq |z|$ such that $g(z)=h(\omega(z))$, then $g$ is subordinate to $h$, or $g(z)\prec h(z)$. Let $\mathcal {S}^{*}$ be the subclass of $\mathcal {S}$ containing starlike functions, mapping the unit disk $\mathbb {D}$ onto the starlike domain. Geometrically, we say that $f\in \mathcal{S}^{*}$ if $\RE (zf'(z)/f(z))>0$. 

In 1992, Ma and Minda \cite{ma-minda} introduced the unified classes of starlike and convex functions using subordination, given as follows:
\begin{equation*}
\mathcal {S}^*(\psi)=\Bigg\{f\in \mathcal {S}:\dfrac{z f'(z)}{f(z)}\prec \psi(z)\Bigg\}\quad \text{and} \quad \mathcal {C}(\psi) =\Bigg\{f\in \mathcal {S} :1+\dfrac{zf''(z)}{f'(z)}\prec \psi (z)\Bigg\}
\end{equation*}

where $\psi$ is an analytic univalent function having positive real part, $\psi(\mathbb{D})$ is starlike with respect to $\psi(0)=1$, symmetric about the real axis and $\psi'(0)>0$. Specializing $\psi$, yields well-known subclasses of $\mathcal{S}^*$.
Many authors discussed the geometrical properties, radius results and coefficient estimations for the starlike class $\mathcal{S}^*(\psi)$. Janowski \cite{1janowski} introduced the subclass, $\mathcal{S}^*[A,B]:=\mathcal{S}^*((1+Az)/(1+Bz))$ where $-1\leq B<A\leq 1$. In 1996, Sok\'{o}\l \ and Stankiewicz \cite{stan} studied the class $\mathcal{S}^{*}_ L:=\mathcal{S}(\sqrt{1+z})$, while Sharma et al. \cite{sharma} proposed the class $\mathcal{S}^{*}_C:=\mathcal{S}(1+4z/3+2z^2/3)$. The classes $\mathcal{S}^{*}_e:=\mathcal{S}^{*}(e^z)$ and $\Delta^*:=\mathcal{S}^{*}(z+\sqrt{1+z^2})$ were introduced by Mendiratta et al.\cite{mendi} and Raina et al.\cite{raina}, respectively. Also, Goel and Kumar \cite{goel} considered the class $\mathcal{S}_{SG}^{*}:=\mathcal{S}^{*}(2/(1+e^{-z}))$ whereas Kumar and Kamaljeet \cite{kumar-ganganiaCardioid-2021} investigated the class $\mathcal{S}_{\wp}^{*}:=\mathcal{S}^{*}(1+ze^z)$ in their study.

In the past, a significant work has been done on strip domains, see \cite{kwon2014,kwon2019,mahzoon2023}. For instance, in 2012, Kuroki and Owa \cite{kuroki2012} defined a strip domain related class $\mathcal{S}(\alpha,\beta)$ of analytic functions $f\in \mathcal{A}$ such that $\alpha<\RE(zf'(z)/f(z))<\beta$ for real numbers $\alpha$ and $\beta$ so that $\alpha<1<\beta$. Later, in 2017, Kargar et al. \cite{kargar2017} introduced another strip domain related class $\mathcal{M}(\alpha)$ given by
\begin{equation*}
    \mathcal{M}(\alpha)=\bigg\{f\in \mathcal{A}:1+ \frac{\alpha-\pi}{2\sin \alpha}<\RE\bigg(\frac{zf'(z)}{f(z)}\bigg)<1+\frac{\alpha}{2\sin \alpha};\pi/2\leq \alpha<\pi\bigg\},
\end{equation*}
which was studied by Sun et al. \cite{sun2019} as well. Motivated by these works, we introduce a new subclass of starlike functions associated with a transcendental function $\psi(z) = 1 + \arctan z$, that maps the unit disk onto a vertical strip domain. Exploring strip domains with a transcendental function is a challenge due to computational complexities, and until now, no attempts have been made in this direction. However, this paper successfully navigates these difficulties, yielding several sharp radius and coefficient related results. 

	
\section{Certain Properties of $\tau(z):=1+\arctan z$}
\noindent Suppose that $\psi(z)=1+\arctan z=:\tau (z)$ and it maps the unit disk onto a domain denoted by $\Omega_\tau$. Now, we will study some geometric properties of this function $\tau(z)$ in the coming results. Before that, a lemma is presented below to determine the range for the real part of $\tau(z)$, which is useful for establishing radius problems and inclusion relations in the coming sections.

\begin{lemma}
 \label{lemma1}
    For $0<r<1,$ the function $\tau(z)$ satisfies 
    $$\min_{|z|=r}\RE \tau(z)=\tau(-r) \quad \text{and} \quad \max_{|z|=r}\RE \tau(z)=\tau(r).$$ 
\end{lemma}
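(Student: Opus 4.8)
The plan is to derive a closed form for $\RE\tau(z)$ as a function of $\RE z$ and $\IM z$, and then to observe that on a circle $|z|=r$ this expression depends monotonically on $\RE z$ alone. I would begin from the logarithmic representation of the principal branch,
\[
\arctan z=\frac{1}{2i}\,\mathrm{Log}\,\frac{1+iz}{1-iz},
\]
which is valid on $\mathbb D$ because the branch cuts $\{iy:|y|\ge 1\}$ of $\arctan$ avoid $\mathbb D$, and which is justified by noting that both sides are analytic on the simply connected domain $\mathbb D$, vanish at $z=0$, and have derivative $1/(1+z^2)$. Writing $z=x+iy$ and multiplying numerator and denominator by the conjugate of the denominator gives
\[
\frac{1+iz}{1-iz}=\frac{(1-x^2-y^2)+2ix}{(1+y)^2+x^2},
\]
where the denominator is real and strictly positive for every $z\in\mathbb D$ (in particular $z\ne i$). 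Hence the argument of the left-hand side equals that of $(1-x^2-y^2)+2ix$, and since $1-x^2-y^2=1-|z|^2>0$ throughout $\mathbb D$, this complex number lies in the open right half-plane, so its argument is $\arctan\big(2x/(1-x^2-y^2)\big)\in(-\pi/2,\pi/2)$ with no branch correction. Taking real parts in the logarithmic formula (and using $\RE\big(\tfrac{1}{2i}w\big)=\tfrac12\IM w$) yields
\[
\RE\tau(z)=1+\frac12\arctan\frac{2x}{1-x^2-y^2},\qquad z=x+iy\in\mathbb D.
\]

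With this identity in hand the lemma is immediate. Fix $r\in(0,1)$ and restrict to $|z|=r$, so that $x^2+y^2=r^2$; then $\RE\tau(z)=1+\tfrac12\arctan\big(2x/(1-r^2)\big)$, in which $1-r^2>0$ is a constant. Since $\arctan$ is strictly increasing, $\RE\tau$ is a strictly increasing function of $x=\RE z$, and $x$ ranges precisely over $[-r,r]$ as $z$ runs over the circle $|z|=r$. Consequently the maximum of $\RE\tau$ on $|z|=r$ is attained exactly at $x=r$, i.e.\ at $z=r$, and the minimum exactly at $x=-r$, i.e.\ at $z=-r$. Evaluating with the tangent double-angle identity $\arctan\big(2r/(1-r^2)\big)=2\arctan r$ (valid for $|r|<1$) gives the extreme values $1+\arctan r=\tau(r)$ and $1-\arctan r=\tau(-r)$, which is exactly the assertion.

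The only step that needs care is the branch bookkeeping in the third display: one must be sure that the number whose argument is being taken does not leave the right half-plane, and this is guaranteed precisely by the hypothesis $|z|<1$, which forces $1-x^2-y^2>0$; everything else is elementary algebra together with the monotonicity of $\arctan$. As a consistency check, $\tau$ has real Taylor coefficients, so $\RE\tau(x+iy)$ is even in $y$, which fits with the extrema occurring on the real diameter. A more abstract alternative would be to invoke the maximum principle for the harmonic function $\RE\tau$ to push the extrema onto the circle and then combine the symmetry $\overline{\tau(\bar z)}=\tau(z)$ with monotonicity in $x$, but the explicit computation above is cleaner and exhibits the sharp extremal points $z=\pm r$ directly.
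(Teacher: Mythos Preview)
Your argument is correct and complete. The explicit formula $\RE\tau(z)=1+\tfrac12\arctan\bigl(2x/(1-|z|^2)\bigr)$ is derived carefully (the branch bookkeeping is handled properly since $1-|z|^2>0$ on $\mathbb D$), and the monotonicity in $x$ on the circle $|z|=r$ immediately pins down the extrema at $z=\pm r$; the double-angle identity then recovers $\tau(\pm r)$.

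This is, however, a genuinely different route from the paper's. The paper argues via the outward normal to the image curve: it computes $\eta\tau'(\eta)$ for $\eta=r_0e^{i\theta}$ and locates the critical points of $\theta\mapsto\RE\tau(r_0e^{i\theta})$ by finding where $\IM(\eta\tau'(\eta))=0$, then identifies $\theta=0$ and $\theta=\pi$ as max and min. Your approach replaces this calculus on the boundary curve with a closed-form expression for $\RE\tau$ and a one-line monotonicity observation. The advantage of your method is that it is fully explicit and rigorous as written: you do not need to enumerate all critical points or argue separately that they give global (rather than local) extrema, issues the paper's sketch leaves implicit. The paper's normal-vector technique is in principle more portable to functions $\tau$ for which no such closed form is available, but for this particular $\tau$ your direct computation is both cleaner and more informative, since it exhibits the exact dependence of $\RE\tau$ on $\RE z$ and $|z|$.
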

\begin{proof}
Let $\tau (z)=1+\arctan z$. For $\mathbb {D}_{r_{0}}=\{z\in \mathbb{C}:|z|<r_{0}\},$ a boundary point of $\tau(\mathbb{D}_{r_{0}})$ can be expressed in the form $\tau(r_{0}e^{i \theta})=1+\arctan(r_{0} e^{i \theta}).$ As $\eta \tau'(\eta)$ represents the outward normal at the point $\tau(\eta),$ where $|\eta|=r_0.$ So, for $\eta=r_0 e^{i \theta},$ we have
$$\eta \tau'(\eta)=\dfrac{r_0 e^{i \theta}}{1+(r_{0} e^{i \theta})^2}.$$
In order to find the bounds for the real part, it suffices to find the points at which the imaginary part of the normal, is a constant. Thus, 
$$\IM (\eta \tau'(\eta))=t(\theta):=\zeta(r_0 \sin \theta+{r_0}^3 \cos(2 \theta) \sin \theta+{r_0}^3 \sin(2 \theta) \cos \theta),$$ where $$\zeta=\dfrac{1}{|1+(r_{0} e^{i \theta})^2|^2}.$$
We observe that $t(\theta)=0$ for $\theta=0$ and $2 \pi.$ By taking $r_0=1,$ a simple computation shows that the minimum and maximum of $\tau$ are obtained at $\theta=\pi$ and 0 respectively. Thus, we obtain  
\begin{equation*}
\min_{|z|=r}\RE \tau(z)=1-\arctan r \quad \text{and} \quad \max_{|z|=r}\RE \tau(z)=1+\arctan r
\end{equation*}
as $r_0$ is chosen arbitrarily.
\end{proof}

Now, we discuss some geometrical properties of the function $\tau(z)$ and $\tau(\mathbb{D})=\Omega_\tau$.

\begin{theorem}\label{1Fconvex}
 The function $\tau(z)=1+\arctan z$ is a convex univalent function.
\end{theorem}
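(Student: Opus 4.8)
The plan is to show that $\tau(z)=1+\arctan z$ is both univalent and convex on $\mathbb{D}$ by direct analysis of its derivative. Recall that $\arctan z = \frac{1}{2i}\log\frac{1+iz}{1-iz}$, so $\tau'(z) = \frac{1}{1+z^2}$, which is nonvanishing and analytic on $\mathbb{D}$ (the singularities are at $z=\pm i$, on the boundary). For univalence, the cleanest route is to observe that $\tau$ is a composition of univalent maps: the Möbius map $z\mapsto w=\frac{1-iz}{1+iz}$ sends $\mathbb{D}$ onto the right half-plane $\RE w>0$, and then $w\mapsto 1+\frac{i}{2}\log w$ is univalent on any simply connected domain omitting $0$; since $\arctan z = \frac{i}{2}\log\frac{1-iz}{1+iz}$ up to sign conventions, $\tau$ is a composition of injective maps and hence injective. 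Alternatively, one can invoke that $\tau(\mathbb{D})$ is the image of $\mathbb{D}$ under $1+\arctan$, which is the strip $\{|\IM w| < ... \}$ type region, but I would lean on the logarithmic representation to get injectivity for free.

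For convexity I would compute $1+\frac{z\tau''(z)}{\tau'(z)}$ and show its real part is positive on $\mathbb{D}$. Since $\tau'(z)=(1+z^2)^{-1}$, we get $\tau''(z) = \frac{-2z}{(1+z^2)^2}$, hence
\[
1+\frac{z\tau''(z)}{\tau'(z)} = 1 - \frac{2z^2}{1+z^2} = \frac{1-z^2}{1+z^2}.
\]
So the task reduces to proving $\RE\frac{1-z^2}{1+z^2} > 0$ for all $z\in\mathbb{D}$. Writing $u=z^2$, which ranges over $\mathbb{D}$ as $z$ does, this is the statement that the Möbius transformation $u\mapsto \frac{1-u}{1+u}$ maps $\mathbb{D}$ into the right half-plane — which is classical, since that map sends $\mathbb{D}$ onto $\{\RE w>0\}$. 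This handles convexity immediately and, incidentally, reproves univalence (a function with $\RE(1+z\tau''/\tau')>0$ and $\tau'(0)\neq 0$ is convex univalent).

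I would also record the normalization details needed to match the paper's framework: $\tau(0)=1$, $\tau'(0)=1>0$, and $\tau$ has real coefficients so $\tau(\mathbb{D})$ is symmetric about the real axis; together with convexity this confirms $\tau$ is an admissible Ma–Minda function. The one genuine subtlety — the only place to be careful — is the branch of $\arctan$: one must confirm that the principal branch is analytic and single-valued on all of $\mathbb{D}$, which follows because $\frac{1+iz}{1-iz}$ avoids the negative real axis (indeed it maps $\mathbb{D}$ into $\RE>0$) and therefore $\log$ of it is well-defined. I do not expect any real obstacle here; the computation $1+z\tau''/\tau' = (1-z^2)/(1+z^2)$ is short, and the positivity of its real part is a one-line Möbius argument, so the proof is essentially a verification.
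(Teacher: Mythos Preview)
Your proposal is correct and aligns with the paper's proof on the essential point: both compute
\[
1+\frac{z\tau''(z)}{\tau'(z)}=\frac{1-z^2}{1+z^2}
\]
and deduce positive real part via the substitution $z^2\mapsto u\in\mathbb{D}$ (equivalently, the paper phrases this as the subordination $\tfrac{1-z^2}{1+z^2}\prec\tfrac{1-z}{1+z}$). The only difference is in the univalence step: the paper argues separately that $\RE g'(z)>0$ (Noshiro--Warschawski) from $g'(z)=\tfrac{1}{1+z^2}\prec\tfrac{1}{1+z}$, whereas you obtain injectivity either from the M\"obius--logarithm factorization or, more efficiently, as an automatic consequence of convexity itself; your route is slightly more economical but not materially different.
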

\begin{proof}
Let $g(z):=\arctan z.$ Note that $g(0)=0$ and 
\begin{equation*}
    g'(z)=\frac{1}{1+z^2}\prec \frac{1}{1+z}\in \mathcal{P},
\end{equation*}
which implies that $\RE g'(z)>0.$ Thus $g(z)$ is a univalent function. Furthermore 
\begin{equation*}
    1+\dfrac{zg''(z)}{g'(z)}=\dfrac{1-z^2}{1+z^2}\prec \dfrac{1-z}{1+z}\in \mathcal{P}.
\end{equation*}
Therefore $\RE (1+zg''(z)/g'(z))>0$. Consequently, both $g$ and $\tau$ are convex univalent functions.
\end{proof}

\begin{remark}\label{1Symm-Caratheodory}
Since $\tau'(0)>0$ and $\tau(\bar{z})=\overline{\tau(z)},$ the domain $\Omega_{\tau}=\tau(\mathbb{D})$ is symmetric about the real axis. Using Theorem~\ref{1Fconvex}, we deduce that $\RE{\tau(z)}\geq 1-\pi/4>0$, which means that $\tau(z)$ is a Carath\'{eodory} function.
\end{remark}

\begin{remark}
By utilizing an alternate proof technique, we can see that the conclusion of Lemma \ref{lemma1} can be directly derived from Theorem~\ref{1Fconvex} and Remark~\ref{1Symm-Caratheodory}.
\end{remark}

\begin{theorem}\label{1FSymmYaxis}
The domain $\Omega_{\tau}:=\{z\in \mathbb{C}:1-\pi/4\leq \RE \tau(z)\leq 1+\pi/4\}$ displays symmetry relative to the line $\RE(w)=1$.
\end{theorem}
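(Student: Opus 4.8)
The plan is to show that the reflection of $\mathbb{C}$ across the vertical line $\RE(w)=1$, i.e.\ the involution $\rho(w)=2-\overline{w}$, carries $\Omega_{\tau}=\tau(\mathbb{D})$ onto itself. Two ingredients make this transparent: the conjugation symmetry $\overline{\tau(z)}=\tau(\overline{z})$ already recorded in Remark~\ref{1Symm-Caratheodory}, and the identity $\tau(-z)=2-\tau(z)$, which is immediate from the oddness of $\arctan$. I would derive the latter from the Maclaurin series $\arctan z=\sum_{n\ge 0}(-1)^{n}z^{2n+1}/(2n+1)$, which contains only odd powers and converges on $\mathbb{D}$, so that $\arctan(-z)=-\arctan z$ there and hence $\tau(-z)=1-\arctan z=2-\tau(z)$.

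Next I would fix $w\in\Omega_{\tau}$ and write $w=\tau(z)$ with $z\in\mathbb{D}$. Then
\[
\rho(w)=2-\overline{w}=2-\overline{\tau(z)}=2-\tau(\overline{z})=\tau(-\overline{z}),
\]
where the third equality is Remark~\ref{1Symm-Caratheodory} and the fourth uses $\tau(-\zeta)=2-\tau(\zeta)$ with $\zeta=\overline{z}$. Since $z\in\mathbb{D}$ forces $-\overline{z}\in\mathbb{D}$, we obtain $\rho(w)\in\tau(\mathbb{D})=\Omega_{\tau}$, so $\rho(\Omega_{\tau})\subseteq\Omega_{\tau}$; as $\rho$ is an involution ($\rho(\rho(w))=w$), applying it again gives $\Omega_{\tau}\subseteq\rho(\Omega_{\tau})$, whence $\rho(\Omega_{\tau})=\Omega_{\tau}$. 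This is precisely the asserted symmetry about $\RE(w)=1$.

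To justify the stated description of $\Omega_{\tau}$ as lying in the strip $\{\,1-\pi/4\le\RE w\le 1+\pi/4\,\}$, I would invoke Lemma~\ref{lemma1}: for every $z\in\mathbb{D}$ with $|z|=r<1$ one has $1-\arctan r\le\RE\tau(z)\le 1+\arctan r$, and letting $r\to 1^{-}$ yields $1-\pi/4\le\RE\tau(z)\le 1+\pi/4$. The bounding lines $\RE(w)=1\pm\pi/4$ are equidistant from $\RE(w)=1$, which is consistent with — and the natural axis of — the reflection symmetry established above.

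The computations are short. The only point requiring care is the branch issue underlying $\tau(-z)=2-\tau(z)$, which is why I would argue from the power series of $\arctan$ on $\mathbb{D}$ rather than by a formal manipulation of the multivalued $\arctan$; once that identity and the elementary fact that $z\mapsto-\overline{z}$ preserves $\mathbb{D}$ are in hand, there is no substantive obstacle.
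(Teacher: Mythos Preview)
Your proof is correct and is, in fact, cleaner than the paper's own argument. The paper parametrizes the boundary, writing $g(z)=\arctan z$ at $z=re^{it}$, and then verifies by direct computation of the real and imaginary parts that $\IM g(\theta)=\IM g(\pi-\theta)$ and $\RE g(\theta)+\RE g(\pi-\theta)=0$; the translation by $1$ then gives symmetry about $\RE(w)=1$. You instead exploit the two functional identities $\tau(\overline{z})=\overline{\tau(z)}$ and $\tau(-z)=2-\tau(z)$ (the latter from the oddness of $\arctan$), and deduce at once that the reflection $\rho(w)=2-\overline{w}$ sends $\tau(z)$ to $\tau(-\overline{z})$, hence fixes $\Omega_{\tau}$ setwise. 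The two approaches are proving the same underlying relation---indeed the paper's coordinate computation amounts to checking $g(re^{i\theta})=-\overline{g(-\overline{re^{i\theta}})}$, which is exactly your pair of identities combined---but your route avoids the explicit real/imaginary part and argument calculations entirely, and applies to the full domain rather than only the boundary curves.
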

\begin{proof}
To verify the symmetry of the domain $\Omega_{\tau}$, it is only necessary to examine values of $\theta\in[0,\pi/2]$, as it is symmetrical about the real axis. The symmetry along the imaginary axis of a function $f\in \mathcal{A}$ is established for $\tau (z)=1+\arctan z$ when the real components of $\tau(\theta)$ and $\tau(\pi-\theta)$ sum to zero and the imaginary components of $\tau(\theta)$ and $\tau(\pi-\theta)$ are equal. 
Now, suppose $g(z)=\arctan(z)$. At $z=re^{it}$ for fixed $r\in (0,1)$,
\begin{equation*}
    \IM g(t)=\frac{1}{4}\log\bigg(\frac{1+r^2+2r\sin t}{1+r^2-2r\sin t}\bigg), \quad t\in [0,\pi].
\end{equation*}
We get the following expression, when $t\rightarrow \theta$ and $\pi-\theta$:
\begin{equation*}
    \IM g(\theta)=\dfrac{1}{4}\log\bigg(\dfrac{1+r^2+2r\sin\theta}{1+r^2-2r\sin\theta}\bigg)\quad\text{and}\quad  \IM g(\pi-\theta)=\dfrac{1}{4}\log\bigg(\dfrac{1+r^2+2r\sin(\pi-\theta)}{1+r^2-2r\sin(\pi-\theta)}\bigg).
\end{equation*}
Clearly, $\IM g(\theta)=\IM g(\pi-\theta).$ On the other hand, for $0\leq \theta\leq \pi/2$, 
\begin{align*}
\RE g(\theta)+\RE g(\pi-\theta)&=\dfrac{1}{2}\bigg\{\arg \bigg(\frac{1+ire^{i\theta}}{1-ire^{i\theta}}\bigg)+\arg \bigg(\frac{1+ire^{i(\pi-\theta)}}{1-ire^{i(\pi-\theta)}}\bigg)\bigg\}\\
&=0.
\end{align*}
Consequently, due to the translation property, $\tau(z)$ is symmetric about the real line $\RE(w)=1,$ while $g(z)$ is symmetric about the imaginary axis.
\end{proof}

\begin{remark}
Note that at $z=e^{i\theta}$, we have
\begin{equation}
   \lim_{\theta \rightarrow \pi/2} \IM \tau(z)= \lim_{\theta \rightarrow \pi/2}\bigg \{\frac{1}{4} \log \bigg(\frac{1+\sin\theta}{1-\sin\theta}\bigg)\bigg\}= \infty 
\end{equation}
and $1-\pi/4\leq \RE \tau(z)\leq 1+\pi/4$. That is,  real part of $\tau(z)$ is bounded whereas imaginary part of $\tau(z)$ is unbounded. Therefore, $\tau(z)$ maps the unit disk onto a {\bf{strip domain}} i.e., $\Omega_{\tau}$ is given by
\begin{equation*}
    \Omega_{\tau}:=\bigg\{z\in \mathbb{C}:1-\frac{\pi}{4}\leq \RE \tau(z)\leq 1+\frac{\pi}{4}\bigg\}.
\end{equation*}
\end{remark}

Thus, in light of the classes defined in \cite{goel,kumar-ganganiaCardioid-2021,mendi,sharma,raina,sokol},  we here below introduce the well-defined subclass of starlike functions associated with a vertical strip domain:
\begin{definition}
    Let $\tau(z)=1+ \arctan z$, which is univalent in $\mathbb{D}$. Then, the class of starlike functions associated with the inverse tangent function is defined as
    \begin{equation}\label{star}
		\mathcal {S}^{*}_{\tau}:=\bigg\{f\in \mathcal {A}:\dfrac{zf'(z)}{f(z)}\prec 1+\arctan z\bigg\}.
	\end{equation}
\end{definition}
 
The integral representation of the class defined in (\ref{star}) is described in the following definition:
\begin{definition}(Integral representation)
A function $f\in \mathcal {S}^{*}_{\tau}$ if and only if an analytic function $\psi(z)\prec \tau(z)$ exists such that 
\begin{equation}
f(z)=z \exp\int_{0}^{z}\dfrac{\psi(t)-1}{t}dt.\label{integral}
\end{equation}
		
By choosing $\psi(t)=\tau(t)$, in the expression given in equation (\ref{integral}), we obtain the function,
$$\tilde{\tau}(z)=z \exp\int_{0}^{z}\dfrac{\arctan t}{t}dt \in \mathcal {S}^{*}_{\tau}.$$
\end{definition}

Given Table \ref{table:1} shows some $\psi_{i} (i=1,2,3)$ functions which belong to the domain $\Omega_{\tau}$.
\begin{table}[h!]
\caption{Functions associated with the class $\mathcal{S}^{*}_{\tau}$}
\centering
\begin{tabular}{c c c } 
 \hline
 i & $\psi_i(z)$ & $f_i(z)$ \\ [0.5ex] 
 \hline
  1 &$1+z/2$& $z \exp(z/2)$ \\
  \hline
 2 &$1+\dfrac{z\exp(z/17)}{2}$ &  $z \exp\bigg(\frac{17}{2}(e^{z/17}-1)\bigg)$  \\
 \hline
 3&$1+\dfrac{z \sin z}{4}$&$z \exp\bigg(\dfrac{1-\cos z}{4}\bigg)$\\[1ex] 
 \hline
\end{tabular}
\label{table:1}
\end{table}	

And since $\tau(z)$ is univalent in $\mathbb{D}$, $\psi_i(\mathbb{D})\subset \tau(\mathbb{D})$ and $\psi_i(0)=\tau(0)$ are enough to prove that $\psi_i(z)\prec \tau(z).$ Thus, the functions $f_i(z)$ belong to the class $\mathcal {S}^{*}_{\tau}$ using the integral representation given in equation (\ref{integral}).
In particular, $\psi(z)=\tau(z)$ gives a function
\begin{equation}
\tilde{\tau}(z)=z \exp \int_{0}^{z}\dfrac{\arctan (t)}{t}dt=z+z^2+\frac{z^3}{2}+\frac{z^4}{18}-\frac{5}{72}z^5-\frac{13}{1800}z^6+\cdots,\label{1 extre}
\end{equation}
which plays the role of an extremal function for many problems involving $\mathcal{S}^{*}_{\tau}$. Note that $\tau(z)$ meets all the conditions of Ma-Minda function and therefore yields the following simple results:
\begin{theorem}[A]
Let $f\in \mathcal {S}^{*}_{\tau}$ and $\bar{\tau}(z)$ given by equation (\ref{1 extre}). Then the following results hold:
\begin{enumerate}
\item Subordination results: $\dfrac{zf'(z)}{f(z)}\prec \dfrac{z\bar{\tau}'(z)}{\bar{\tau}(z)}$ and $\dfrac{f(z)}{z}\prec \dfrac{\bar{\tau}(z)}{z}.$
\item Growth theorem: For $|z|=r<1, -\bar{\tau}(-r)\leq |f(z)|\leq \bar{\tau}(r).$
\item Covering theorem: Either f is a rotation of $\bar{\tau}$ or $\{w:|w|<-\bar{\tau}(-1)\}\subset f(\mathbb {D})$, where $-\bar{\tau}(-1) = \lim_{r \to 1} -\bar{\tau}(-r).$ 
\item Rotation theorem: For $|z|=r<1, \bigg|\arg \dfrac{f(z)}{z}\bigg|\leq \max_{|z|=r}\arg \dfrac{\bar{\tau}(z)}{z}.$
\end{enumerate} 
Equality holds in (2) and (4) for some non-zero z if and only if $f(z)$ is a rotation of $\bar{\tau}(z)$.
\end{theorem}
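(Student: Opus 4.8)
The plan is to recognise Theorem~[A] as the Ma--Minda package (\cite{ma-minda}) specialised to the generating function $\tau$, so that the whole proof reduces to checking that $\tau$ is admissible as a Ma--Minda function and then reading off, one by one, the growth, distortion, covering and rotation statements.

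First I would assemble the hypotheses. A Ma--Minda function must be analytic and univalent in $\mathbb{D}$ with $\psi(0)=1$, $\psi'(0)>0$, $\psi(\mathbb{D})$ starlike with respect to $1$ and symmetric about the real axis. For $\psi=\tau$: Theorem~\ref{1Fconvex} gives that $\tau$ is convex univalent (which is more than enough, since a convex domain containing the interior point $\tau(0)=1$ is in particular starlike with respect to it); the expansion $\arctan z=z-z^3/3+\cdots$ gives $\tau(0)=1$ and $\tau'(0)=1>0$; and Remark~\ref{1Symm-Caratheodory} records both the symmetry about $\RE w=1$ and $\RE\tau(z)\ge 1-\pi/4>0$. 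Moreover, by construction the function $\bar{\tau}$ of (\ref{1 extre}) satisfies $z\bar{\tau}'(z)/\bar{\tau}(z)=\tau(z)$, which is exactly the normalisation Ma and Minda impose on their extremal, so $\bar{\tau}$ plays the role of their $\tilde{\psi}$ throughout.

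With this dictionary in place, the four assertions are immediate transcriptions of \cite{ma-minda}. The first subordination in (1) is just the defining membership $zf'/f\prec\tau=z\bar{\tau}'/\bar{\tau}$. For $f/z\prec\bar{\tau}/z$ one writes $\log(f(z)/z)=\int_0^z (p(t)-1)/t\,dt$ with $p=zf'/f\prec\tau$, compares with the corresponding integral $\log(\bar{\tau}(z)/z)=\int_0^z (\arctan t)/t\,dt$, and applies the subordination argument of \cite{ma-minda}, which is available here because $\tau$ is convex. Statements (2) and (4) then follow by restricting $f/z\prec\bar{\tau}/z$ to the radius through a fixed $z_0$ with $|z_0|=r$ and using that $\RE(\bar{\tau}(z)/z)$, respectively $\arg(\bar{\tau}(z)/z)$, attains its extreme values on $|z|=r$ at $z=\pm r$ — the modulus version being the analogue for $\bar{\tau}/z$ of the monotonicity extracted in Lemma~\ref{lemma1} for $\tau$ — while the equality cases come from the equality case of the subordination principle, which forces the Schwarz function to be a rotation (hence $f$ to be a rotation of $\bar{\tau}$). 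Statement (3) is the limit $r\to 1^-$ of the lower bound in (2).

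The only step that is not pure bookkeeping is the finiteness needed for (3): one must check that $-\bar{\tau}(-1)=\lim_{r\to 1^-}(-\bar{\tau}(-r))$ is a finite positive number, i.e.\ that $\int_0^1 (\arctan s)/s\,ds$ converges. It does, and in fact equals Catalan's constant $G=\sum_{n\ge 0}(-1)^n/(2n+1)^2$, so $-\bar{\tau}(-1)=e^{-G}\approx 0.40$. I expect this small analytic point — together with, if the authors want a self-contained account, re-running the short maximum-principle argument that $z=\pm r$ are the genuine extremal points on $|z|=r$ — to be the only obstacle; every other part of the proof is substitution into \cite{ma-minda}.
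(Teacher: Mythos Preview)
Your proposal is correct and mirrors the paper's own treatment: the paper does not supply a standalone proof of Theorem~[A] but simply remarks, immediately before the statement, that ``$\tau(z)$ meets all the conditions of Ma-Minda function and therefore yields the following simple results,'' which is precisely the reduction you carry out. Your write-up is in fact more thorough than the paper's, since you spell out the verification of the Ma--Minda hypotheses and additionally identify the covering radius $-\bar{\tau}(-1)=e^{-G}$ via Catalan's constant, a detail the paper omits.
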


\begin{lemma}(Disk Containment)
\label{1 disklemma}
Let us consider the disk $D(a,r):=\{w\in \mathbb {C}:|w-a|=r\}$ with real $a$ and $1\in D(a,r),$ then we observe that $D(a,r)\in \Omega_{\tau}$ if and only if \begin{equation}
1-\frac{\pi}{4}\leq a-r<a+r\leq 1+\frac{\pi}{4}.\label{r}
\end{equation}
Moreover, the disk $D(1,\pi/4)$ is the largest disk contained in the region $\Omega_{\tau}$, further $D(a,r)\subset D(1,\pi/4)$ if and only if $$|a-1|\leq \frac{\pi}{4}-r.$$
It can also be viewed as,
\begin{equation}
	1-\frac{\pi}{4}\leq a-r\quad  \text{when}\quad  a\leq \frac{\pi}{4}\label{r1}
\end{equation}
and 
\begin{equation}
a+r\leq 1+\frac{\pi}{4}\quad \text{when}\quad a>\frac{\pi}{4}.\label{r2}
\end{equation}
\end{lemma}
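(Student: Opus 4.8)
\emph{Proof proposal.} The plan is to reduce the entire lemma to elementary facts about half--planes and Euclidean disks, exploiting the description of $\Omega_\tau$ obtained above, namely that $\Omega_\tau$ is the vertical strip $\{w\in\mathbb{C}:1-\pi/4\le\RE w\le 1+\pi/4\}$.

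First I would establish the equivalence~(\ref{r}). Because the centre $a$ is real, the orthogonal projection of the closed disk $D(a,r)$ onto the real axis is exactly the interval $[a-r,a+r]$, and the strip $\Omega_\tau$ is precisely the preimage under $\RE$ of the interval $[1-\pi/4,1+\pi/4]$. Hence $D(a,r)\subset\Omega_\tau$ if and only if $[a-r,a+r]\subseteq[1-\pi/4,1+\pi/4]$, which is (\ref{r}); the inner inequality $a-r<a+r$ is automatic since $r>0$. The standing hypothesis $1\in D(a,r)$ is not needed for this step; it is listed only because every disk arising in the radius problems of later sections contains the point $1$.

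Next, for the extremal disk: combining the two outer inequalities in (\ref{r}) gives $2r\le\pi/2$, so every disk contained in $\Omega_\tau$ has radius at most $\pi/4$; since $D(1,\pi/4)$ attains this value it is a largest such disk, and a disk of radius $\pi/4$ fitting in the strip is forced to be centred on the mid--line $\RE w=1$, so among real--centred disks it is the unique largest one. For the nesting criterion $D(a,r)\subset D(1,\pi/4)\Leftrightarrow|a-1|\le\pi/4-r$ I would quote (or prove in one line) the standard fact $D(a,r)\subset D(b,R)\Leftrightarrow|a-b|+r\le R$: the direction ``$\Leftarrow$'' is $|w-b|\le|w-a|+|a-b|\le r+|a-b|\le R$, and ``$\Rightarrow$'' follows, when $a\ne b$, by testing the boundary point $a+r(a-b)/|a-b|$ of $D(a,r)$, which must lie in $D(b,R)$; the case $a=b$ is trivial. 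Specialising to $b=1$, $R=\pi/4$ gives the claim.

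Finally, (\ref{r1}) and (\ref{r2}) are obtained by reading (\ref{r}) ``one wall at a time'': removing the modulus in $|a-1|\le\pi/4-r$ recovers simultaneously $1-\pi/4\le a-r$ and $a+r\le1+\pi/4$, and a short check shows that when $a$ lies on the left half of the strip the first inequality by itself already forces the second, and symmetrically on the right half, which is exactly the content of (\ref{r1})--(\ref{r2}). I do not expect any genuine obstacle in this lemma; the only points needing a little care are keeping the closed/open conventions for $\Omega_\tau$ and for $D(a,r)$ consistent so that the displayed inequalities come out non--strict precisely as stated, not overlooking the degenerate case $a=b$ in the disk--nesting step, and observing that it is reality of $a$ which collapses the two--sided strip condition to the single modulus inequality $|a-1|\le\pi/4-r$.
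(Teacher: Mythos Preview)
Your proposal is correct and in line with the paper's treatment: the paper does not give a proof at all, but simply remarks that the lemma follows directly from the convexity of $\tau$ (Theorem~\ref{1Fconvex}), the symmetry of $\Omega_\tau$ about $\RE w=1$ (Theorem~\ref{1FSymmYaxis}), and the Carath\'{e}odory bounds (Remark~\ref{1Symm-Caratheodory}), which together yield precisely the strip description you take as your starting point. Your argument is the natural elementary unpacking of that remark.
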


\begin{figure}[h]
\centering
\includegraphics[width=3cm]{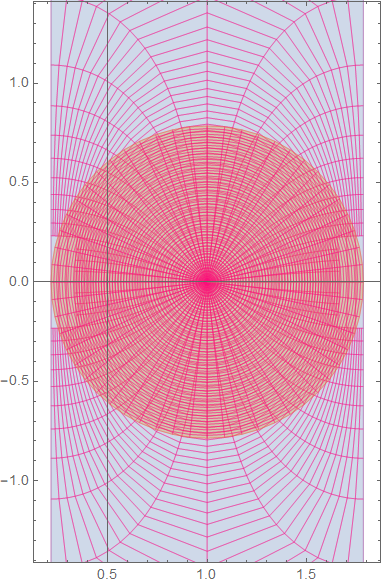}
\caption{The largest disk that can be inscribed inside the domain $\Omega_{\tau}$ is $D(1,\pi/4)$.}
\end{figure}

The proof of the Lemma \ref{1 disklemma} is skipped here as it directly follows from Theorem~\ref{1Fconvex}, Theorem~\ref{1FSymmYaxis} and Remark~\ref{1Symm-Caratheodory}.
\begin{corollary}
The disk $\{ w: |w-1|\leq \arctan(r)\}$ is contained in $\tau(|z|\leq r)$ and is maximal.
\end{corollary}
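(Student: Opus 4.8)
The plan is to combine Lemma~\ref{lemma1} with the translation symmetry of Theorem~\ref{1FSymmYaxis} to pin down, for each fixed $r\in(0,1)$, the exact horizontal extent of the region $\tau(|z|\le r)$, and then observe that the claimed disk is precisely the largest disk centered at $1$ that fits inside a vertical strip of that width. First I would recall from the proof of Lemma~\ref{lemma1} that on the circle $|z|=r$ the extreme values of $\RE\tau$ are attained at $z=\pm r$, giving $\RE\tau(\pm r)=1\pm\arctan r$; since $\tau$ is convex univalent (Theorem~\ref{1Fconvex}) the image $\tau(|z|\le r)$ is itself convex, hence it is contained in the closed vertical strip $\{w:1-\arctan r\le\RE w\le 1+\arctan r\}$ and its boundary touches both edges of that strip.

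Next I would use the symmetry of $\tau(\mathbb{D})$ — and more precisely of $\tau(|z|\le r)$ — about the horizontal line $\RE w=1$, which follows from Remark~\ref{1Symm-Caratheodory} together with the translation argument in Theorem~\ref{1FSymmYaxis}: writing $\tau(z)-1=\arctan z$ and noting $\arctan(-\bar z)=-\overline{\arctan z}$, the set $\arctan(|z|\le r)$ is symmetric about the imaginary axis, so $\tau(|z|\le r)$ is symmetric about $\RE w=1$. Combined with the width computation, this forces the largest disk centered on the axis of symmetry that is inscribed in $\tau(|z|\le r)$ to have its radius limited only by the horizontal walls, i.e. radius exactly $\arctan r$; the disk $\{w:|w-1|\le\arctan r\}$ fits because convexity guarantees that the strip-bounding constraint is the binding one (this is the disk-containment principle already recorded in Lemma~\ref{1 disklemma}, applied with $a=1$, $r\mapsto\arctan r$, using $1-\tfrac{\pi}{4}\le 1-\arctan r$). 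Maximality is immediate: any disk $D(1,\rho)$ with $\rho>\arctan r$ would contain a point with real part exceeding $1+\arctan r$, which lies outside $\tau(|z|\le r)$ by Lemma~\ref{lemma1}.

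The one genuine point needing care — and the step I would flag as the main obstacle — is the claim that the inscribed disk of maximal radius centered at $1$ is obstructed \emph{only} by the vertical walls and not by the (curved) top and bottom portions of $\partial\tau(|z|\le r)$. For $r<1$ the curve $\arctan(re^{i\theta})$ does bend, so one must check that at the real-part extremes $\theta=0,\pi$ the curve does not "cut in" toward the line $\RE w=1$ before reaching $\RE w=1\pm\arctan r$; equivalently, that the two points $1\pm\arctan r$ are themselves on the boundary and are the closest boundary points to $1$ along the horizontal direction — which is exactly what the normal-direction computation in the proof of Lemma~\ref{lemma1} establishes (the imaginary part of the outward normal vanishes only at $\theta=0,\pi$, so these are the only candidates for horizontal tangency, and convexity upgrades this to the global horizontal extremes). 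Once that is in hand, the corollary is just the $r<1$ analogue of Lemma~\ref{1 disklemma}, and I would write it up in two or three lines citing Lemma~\ref{lemma1}, Theorem~\ref{1Fconvex} and Remark~\ref{1Symm-Caratheodory}, exactly in the spirit of the remark that Lemma~\ref{1 disklemma}'s proof was "skipped."
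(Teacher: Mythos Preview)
Your maximality argument is sound: since $\tau(|z|\le r)\subset\{w:|\RE w-1|\le\arctan r\}$ by Lemma~\ref{lemma1}, any disk $D(1,\rho)$ with $\rho>\arctan r$ already exits this strip and hence exits $\tau(|z|\le r)$.

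The containment direction, however, has a genuine gap. You need to show that \emph{every} boundary point of $\tau(|z|\le r)$ lies at Euclidean distance $\ge\arctan r$ from $1$, i.e.\ that $\min_{\theta}|\arctan(re^{i\theta})|=\arctan r$. What you actually establish (via Lemma~\ref{lemma1} and the normal computation) is only that $1\pm\arctan r$ are the points of extreme \emph{real part} on the boundary. Those two facts are not the same: a convex region symmetric about both the real axis and the line $\RE w=1$, with real-axis cross-section exactly $[1-\arctan r,\,1+\arctan r]$, need not contain $D(1,\arctan r)$. The diamond with vertices $1\pm\arctan r$ and $1\pm i\arctan r$ is convex, has both symmetries, has the correct horizontal extent and vertical tangents nowhere relevant---yet the disk $D(1,\arctan r)$ is not contained in it. So convexity plus double symmetry plus the real-part information from Lemma~\ref{lemma1} is insufficient; your sentence ``convexity upgrades this to the global horizontal extremes'' is true but beside the point, and invoking Lemma~\ref{1 disklemma} does not help because that lemma concerns the full strip $\Omega_\tau$, which is unbounded vertically, whereas $\tau(|z|\le r)$ for $r<1$ is a bounded region whose curved top and bottom could in principle intrude.

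To close the gap you must actually show the nearest boundary point to $1$ lies on the real axis. One clean route is to use the explicit description
\[
\arctan(\mathbb{D}_r)=\bigl\{u+iv:\ (1+r^2)\sin^2 u+(1-r^2)\sinh^2 v<r^2\bigr\},
\]
obtained from $|\tan(u+iv)|^2=(\sin^2u+\sinh^2v)/(\cos^2u+\sinh^2v)$, and verify that on the circle $u^2+v^2=(\arctan r)^2$ the left-hand side is maximised at $v=0$ (where it equals $r^2$ exactly). This is a one-variable calculus check, and it is the missing ingredient. The paper itself states the corollary without proof, treating it as parallel to Lemma~\ref{1 disklemma}; but the parallel is imperfect precisely because $\Omega_\tau$ is a strip while $\tau(|z|\le r)$ is not, so you should not expect the citation of Lemma~\ref{lemma1}, Theorem~\ref{1Fconvex} and Remark~\ref{1Symm-Caratheodory} alone to carry the argument.
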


Next, we present a conclusion in the form of the following theorem.

\begin{theorem}
Let $-1<B<A\leq 1.$ Then, $p(z):=(1+Az)/(1+Bz) \in \mathcal{S}_{\tau}^{*}$ if and only if
\begin{equation}\label{1J-condition}
	1-\frac{\pi}{4}\leq \frac{1-A}{1-B}\leq \frac{1+A}{1+B}\leq 1+\frac{\pi}{4}, 
	\end{equation}
or equivalently,
\begin{equation}\label{1J-condition2}
	A\leq \begin{cases}
		\frac{\pi}{4}+(1-\frac{\pi}{4})B \quad \text{if} \quad \frac{1-AB}{1-B^2}\leq \frac{\pi}{4}\\
		\frac{\pi}{4}+(1+\frac{\pi}{4})B \quad \text{if} \quad \frac{1-AB}{1-B^2}>\frac{\pi}{4}. 
	\end{cases}
\end{equation}
\end{theorem}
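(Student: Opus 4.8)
The plan is to reduce the subordination $p\prec\tau$ to a disk-containment problem and then invoke Lemma~\ref{1 disklemma}. Because $\tau$ is univalent (in fact convex univalent, by Theorem~\ref{1Fconvex}) and $p(0)=1=\tau(0)$, the condition $p\prec\tau$ --- which is what the assertion $p\in\mathcal{S}^{*}_{\tau}$ means here, equivalently $\mathcal{S}^{*}[A,B]\subseteq\mathcal{S}^{*}_{\tau}$ --- is equivalent to the plain inclusion $p(\mathbb{D})\subseteq\Omega_{\tau}$: indeed $\tau^{-1}\circ p$ is then a well-defined analytic self-map of $\mathbb{D}$ fixing $0$, hence a Schwarz function by the Schwarz lemma, and conversely any subordination writes $p$ as $\tau$ composed with a Schwarz function. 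Thus the whole problem reduces to deciding when the image of $\mathbb{D}$ under the M\"obius map $p$ lies inside the strip $\Omega_{\tau}$.

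Next I would locate $p(\mathbb{D})$ explicitly. Since $|B|<1$, the map $p$ is analytic on a neighbourhood of $\overline{\mathbb{D}}$ (its only possible pole, $-1/B$, has modulus exceeding $1$), and, being a (possibly affine) M\"obius transformation, it carries the circle $\partial\mathbb{D}$ onto a circle; as $p$ has real coefficients this circle is symmetric about $\mathbb{R}$, so its real diameter has endpoints $p(-1)=(1-A)/(1-B)$ and $p(1)=(1+A)/(1+B)$. Hence $p(\mathbb{D})=D(a,r)$ with
\[
a=\frac{1}{2}\left(\frac{1-A}{1-B}+\frac{1+A}{1+B}\right)=\frac{1-AB}{1-B^{2}},\qquad r=\frac{1}{2}\left(\frac{1+A}{1+B}-\frac{1-A}{1-B}\right)=\frac{A-B}{1-B^{2}},
\]
and one records the identities $a-r=(1-A)/(1-B)$ and $a+r=(1+A)/(1+B)$. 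Here $r>0$ because $B<A$, and $1=p(0)\in D(a,r)$, so the hypotheses of Lemma~\ref{1 disklemma} are met.

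Applying Lemma~\ref{1 disklemma} then gives immediately that $D(a,r)\subseteq\Omega_{\tau}$ if and only if $1-\pi/4\le a-r<a+r\le 1+\pi/4$, and substituting the two identities above converts this into \eqref{1J-condition}; the strict middle inequality holds automatically, being equivalent to $B<A$. For the reformulation \eqref{1J-condition2} I would invoke the two-case reading \eqref{r1}--\eqref{r2} of the same lemma, under which the dichotomy on the centre $a\le\pi/4$ versus $a>\pi/4$ becomes $(1-AB)/(1-B^{2})\le\pi/4$ versus $>\pi/4$. In the first case the operative constraint $a-r\ge 1-\pi/4$ reads $(1-A)/(1-B)\ge 1-\pi/4$, and multiplying through by the positive number $1-B$ yields $A\le\pi/4+(1-\pi/4)B$. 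In the second case the operative constraint $a+r\le 1+\pi/4$ reads $(1+A)/(1+B)\le 1+\pi/4$, and multiplying through by the positive number $1+B$ yields $A\le\pi/4+(1+\pi/4)B$. This is precisely \eqref{1J-condition2}.

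Once the reduction to disk containment is in place the remainder is routine computation, so I foresee no serious obstacle; the points needing care are (i) confirming that $p(\mathbb{D})$ is a genuine non-degenerate disk and pinning down its centre and radius --- equivalently, checking the endpoint values $p(\pm1)$ and the identities for $a\pm r$ --- and (ii) observing that $-1<B<1$ keeps both $1-B$ and $1+B$ positive, so clearing these denominators preserves all inequalities. It is also worth stating explicitly that $1\in D(a,r)$, so that Lemma~\ref{1 disklemma} applies verbatim, and that the strict inequality $B<A$ guarantees $r>0$.
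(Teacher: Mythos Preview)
Your proposal is correct and follows essentially the same approach as the paper: identify $p(\mathbb{D})$ as the disk $D(a,r)$ with $a=(1-AB)/(1-B^{2})$, $r=(A-B)/(1-B^{2})$, then invoke Lemma~\ref{1 disklemma} (equations \eqref{r}, \eqref{r1}, \eqref{r2}) to obtain \eqref{1J-condition} and \eqref{1J-condition2}. Your write-up is in fact more careful than the paper's, supplying the justification that $p\prec\tau$ is equivalent to $p(\mathbb{D})\subseteq\Omega_{\tau}$ via univalence of $\tau$, and checking the sign conditions on $1\pm B$ needed when clearing denominators.
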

\begin{proof} In order to prove that $p(z)\in \mathcal{S}_{\tau}^{*},$ we need to show that 
$p(z)\prec 1+\arctan z$ for $z\in \mathbb{D}.$
We note that the bilinear transformation $p(z)$ takes the unit disk $\mathbb {D}$ onto the disc $D(a,r)$ where $$a=\frac{1-AB}{1-B^2}, \quad r=\frac{A-B}{1-B^2}.$$
Then end points of the diameter of $D(a,r)$ are $a-r=(1-A)/(1-B)$ and $a+r=(1+A)/(1+B)$ such that $1\in D(a,r).$ Hence, $p\in \mathcal{S}_{\tau}^{*}$ if and only if $D(a,r)\subset \Omega_{\tau}$, and hence \eqref{1J-condition} follows from equation \eqref{r}. Equivalently, $p\in \mathcal{S}_{\tau}^{*}$ if and only if $D(a,r)\subset D(1,\pi/4)$, which proves \eqref{1J-condition2} with the help of \eqref{r1} and \eqref{r2}.
\end{proof}
\begin{corollary}
If $A$ and $B$ satisfy either \eqref{1J-condition} or $\eqref{1J-condition2}$, then $\mathcal{S}^{*}[A,B] \subset \mathcal {S}^{*}_{\tau}$.
\end{corollary}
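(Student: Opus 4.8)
The plan is to obtain this inclusion as an immediate consequence of the preceding theorem via transitivity of subordination, so the work is essentially bookkeeping. First I would unwind the definition: by construction $\mathcal{S}^{*}[A,B]=\mathcal{S}^{*}\big((1+Az)/(1+Bz)\big)$, so $f\in\mathcal{S}^{*}[A,B]$ means exactly that $zf'(z)/f(z)\prec(1+Az)/(1+Bz)$, i.e. there is a Schwarz function $\omega_1$ with $zf'(z)/f(z)=(1+A\omega_1(z))/(1+B\omega_1(z))$.

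Next I would use the hypothesis: since $A$ and $B$ satisfy \eqref{1J-condition} (equivalently \eqref{1J-condition2}), the theorem just proved shows that the disk $D(a,r)$ with $a=(1-AB)/(1-B^2)$, $r=(A-B)/(1-B^2)$ lies in $\Omega_{\tau}$, which is precisely the assertion $(1+Az)/(1+Bz)\prec 1+\arctan z=\tau(z)$; write $(1+Az)/(1+Bz)=\tau(\omega_2(z))$ for a Schwarz function $\omega_2$. Composing, $zf'(z)/f(z)=\tau\big(\omega_2(\omega_1(z))\big)$, and $\omega_2\circ\omega_1$ is again a Schwarz function (it fixes the origin and satisfies $|\omega_2(\omega_1(z))|\le|\omega_1(z)|\le|z|$). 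Hence $zf'(z)/f(z)\prec\tau(z)$, that is, $f\in\mathcal{S}^{*}_{\tau}$; since $f$ was arbitrary, $\mathcal{S}^{*}[A,B]\subset\mathcal{S}^{*}_{\tau}$.

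I do not anticipate a genuine obstacle. The only subtlety worth flagging is notational: the phrase "$p(z)\in\mathcal{S}^{*}_{\tau}$" in the preceding theorem, applied to the non-normalized Möbius function $p(z)=(1+Az)/(1+Bz)$, is shorthand for the subordination $p\prec\tau$, and it is this subordination (not membership in $\mathcal{A}$) that feeds the transitivity argument; note also that univalence of $\tau$, though available by Theorem~\ref{1Fconvex}, is not even required for the composition step.
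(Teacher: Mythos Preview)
Your proposal is correct and follows exactly the route the paper intends: the corollary is stated without proof precisely because it is an immediate consequence of the preceding theorem via transitivity of subordination, and that is what you carry out. Your observation that ``$p(z)\in\mathcal{S}^{*}_{\tau}$'' in the theorem is shorthand for the subordination $p\prec\tau$ (rather than membership in $\mathcal{A}$) is apt and matches the first line of the theorem's proof.
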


\begin{example} The necessary and sufficient condition for $(1+Az)/(1+Bz)$ to be in $\mathcal {S}^{*}_{\tau}$ for some special choices of $A$ and $B$, is given by
\begin{itemize}
\item[(a)] When $B=0$, $1+Az\in \mathcal {S}^{*}_{\tau}$ if and only if $-\pi/4\leq A\leq \pi/4.$
\item[(b)] When $A=0$, $1/(1+Bz)\in \mathcal {S}^{*}_{\tau}$ if and only if $-\pi/(\pi+4)\leq B\leq \pi/(\pi+4).$
\item[(c)] When $B=-A$, $(1+Az)/(1-Az)\in \mathcal {S}^{*}_{\tau}$ if and only if $-\pi/(\pi+8)\leq A\leq \pi/(\pi+8).$ 
\end{itemize}
\end{example}

\section{Inclusion Relations and Certain Radius Estimates}
\noindent In this section, many inclusion properties of the class $\mathcal{S}^{*}_{\tau}$ involving the already well established classes, are derived. Let us now recall some subclasses of starlike functions in the present context, namely, the class of starlike functions of order $\alpha$ denoted by $\mathcal{S}^{*}(\alpha)$ $(0\leq \alpha <1)$ satisfying the condition, $\RE( zf'(z)/f(z))>\alpha.$ An interesting class studied by Uralegaddi et al.\cite{urale} is $\mathcal{M}(\alpha) (\alpha>1),$ satisfying $\RE(z f'(z)/f(z))<\alpha.$ Also, a closely related class studied by Ravichandran and Kumar\cite{kumar}, the class of starlike functions of reciprocal order $\alpha$ $(0\leq \alpha<1),$ denoted as $\mathcal{RS}^{*}(\alpha)$, satisfying $\RE(f(z)/(zf'(z)))>\alpha$ and the class of $k$-starlike functions denoted by $k-\mathcal{ST}$ $(k\geq 0)$, introduced by Kanas and Wisniowska \cite{kanas} with the characterization $\RE(zf'(z)/f(z))>k|zf'(z)/f(z)-1|$. In addition, by adding one more parameter $\alpha,$ Kanas and Raducanu generalised it to $\mathcal{ST}(k,\alpha)$ with the condition given by $\RE(zf'(z)/f(z))>k|zf'(z)/f(z)-1|+\alpha$.
\begin{theorem}
\label{theorem1}
The class $\mathcal{S}^{*}_{\tau}$ satisfies the following inclusion relations:
\begin{enumerate}
\item $\mathcal{S}^{*}_{\tau}\subset \mathcal{S}^{*}(\alpha),$ for $0\leq \alpha \leq 1-\pi/4.$
\item $\mathcal{S}^{*}_{\tau}\subset \mathcal{RS}^{*}(1/\alpha)\subset \mathcal{M}(\alpha),$ for $\alpha\geq 1+\pi/4.$
\item If $k>1$ and $0\leq \alpha<1,$ then $\mathcal{ST}(k,\alpha)\subset \mathcal{S}^{*}_{\tau},$ for $k\geq (\pi+4(1-\alpha))/\pi.$ In particular, $k-\mathcal{ST}\subset \mathcal{S}^{*}_{\tau}$ for $k\geq 1+4/\pi.$
\end{enumerate}
\end{theorem}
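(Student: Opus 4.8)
The plan is to reduce each of the three inclusions to an inequality comparing the boundary of $\Omega_\tau$ (whose real part ranges over $[1-\pi/4,\,1+\pi/4]$ by the Remark) with the defining functional of the target class, using Lemma~\ref{lemma1} together with the disk-containment Lemma~\ref{1 disklemma}. For part (1), if $f\in\mathcal S^*_\tau$ then $zf'(z)/f(z)\prec\tau(z)$, and since subordination forces the image of $zf'/f$ into $\Omega_\tau$, by Lemma~\ref{lemma1} (or directly by Remark~\ref{1Symm-Caratheodory}) we get $\RE(zf'(z)/f(z))\ge 1-\pi/4\ge\alpha$ for every $\alpha\le 1-\pi/4$; hence $f\in\mathcal S^*(\alpha)$. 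This is the easy case and needs essentially no computation.

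For part (2), I would first establish $\mathcal S^*_\tau\subset\mathcal{RS}^*(1/\alpha)$ for $\alpha\ge 1+\pi/4$. Writing $w=zf'(z)/f(z)$, we know $w\in\Omega_\tau$, so $1-\pi/4\le\RE w\le 1+\pi/4$ and, crucially, $w$ lies in the disk $D(1,\pi/4)$ by the maximality statement in Lemma~\ref{1 disklemma} (the largest disk inscribed in $\Omega_\tau$ is $D(1,\pi/4)$, and the image of the convex region is contained in it — actually one only needs $|w-1|\le\pi/4$, which follows because $\tau$ maps $\mathbb D$ into the vertical strip and, more sharply, the Corollary gives $|\tau(z)-1|\le\arctan 1=\pi/4$). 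From $|w-1|\le\pi/4$ one computes that $\RE(1/w)\ge (\text{something})$; the cleanest route is: $w$ ranges over $D(1,\pi/4)$, so $1/w$ ranges over the image of that disk under inversion, which is the half-plane or disk determined by the two real endpoints $1\mp\pi/4$, giving $\RE(1/w)\ge \tfrac12\big(\tfrac{1}{1+\pi/4}+\tfrac{1}{1-\pi/4}\big)$ — but since we only need $\RE(1/w)\ge 1/\alpha$ for all $\alpha\ge 1+\pi/4$, it suffices to show $\RE(1/w)\ge 1/(1+\pi/4)$. This last inequality is equivalent to $|w-1|\le\pi/4$ together with $\RE w$ bounded, which we have. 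Then the standard inclusion $\mathcal{RS}^*(1/\alpha)\subset\mathcal M(\alpha)$ (if $\RE(f/(zf'))>1/\alpha$ then $|zf'/f|<\alpha$, hence $\RE(zf'/f)<\alpha$) closes part (2).

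For part (3), the direction is reversed: I must show $zf'(z)/f(z)\prec\tau(z)$ for every $f\in\mathcal{ST}(k,\alpha)$ with $k$ large enough. The image of $zf'/f$ for functions in $\mathcal{ST}(k,\alpha)$ is known (Kanas–Răducanu) to lie in a bounded region $\Omega_{k,\alpha}$, namely the interior of a conic (ellipse when $k>1$) with vertex on the real axis at $x=(k+\alpha)/(k+1)$ and with rightmost real point also computable; the region is contained in the disk centered at $1$ of radius $(1-\alpha)/(k-1)$ when $k>1$ (this is the standard bound $|zf'/f-1|<1/(k-1)\cdot(1-\alpha)$ type estimate — I would cite or re-derive the precise radius of the smallest disk about $w=1$ containing $\Omega_{k,\alpha}$). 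By Lemma~\ref{1 disklemma}, $\Omega_{k,\alpha}\subset D(1,\pi/4)\subset\Omega_\tau$ as soon as that radius is $\le\pi/4$, i.e. $(1-\alpha)/(k-1)\le\pi/4$, which rearranges to $k\ge 1+4(1-\alpha)/\pi=(\pi+4(1-\alpha))/\pi$; setting $\alpha=0$ gives $k\ge 1+4/\pi$ for $k\text{-}\mathcal{ST}$. Since $\tau$ is univalent (Theorem~\ref{1Fconvex}) and $\Omega_{k,\alpha}$ shares the point $1=\tau(0)$, the containment of regions upgrades to a subordination.

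The main obstacle is pinning down, in part (3), the exact radius of the smallest disk centered at $w=1$ that contains the conic domain $\Omega_{k,\alpha}$ associated with $\mathcal{ST}(k,\alpha)$ — the geometry of these parabolic/elliptic/hyperbolic regions is delicate, and one must be careful that for $k>1$ the relevant bounding quantity is indeed $(1-\alpha)/(k-1)$ and that the disk-containment criterion \eqref{r1}–\eqref{r2} is applied on the correct side. Parts (1) and (2) are routine once Lemma~\ref{lemma1}, Lemma~\ref{1 disklemma}, and the Corollary are invoked.
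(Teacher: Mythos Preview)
Your treatment of part~(1) matches the paper's and is correct.

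Part~(2), however, contains a genuine gap. You assert that $w=zf'(z)/f(z)$ lies in the disk $D(1,\pi/4)$, appealing to Lemma~\ref{1 disklemma} and its Corollary. Both of those results say that $D(1,\pi/4)$ is the largest disk \emph{contained in} $\Omega_\tau$, i.e.\ $D(1,\pi/4)\subset\Omega_\tau$; they do \emph{not} give the reverse inclusion. Since $\Omega_\tau$ is an unbounded vertical strip (the imaginary part of $\tau(z)$ tends to $\infty$ as $z\to\pm i$), the containment $\Omega_\tau\subset D(1,\pi/4)$ is false, and the bound $|w-1|\le\pi/4$ on which your argument rests does not hold. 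The paper proceeds differently: it argues directly that
\[
\RE\frac{f(z)}{zf'(z)}>\min_{|z|=1}\RE\frac{1}{1+\arctan z}=\frac{4}{4+\pi},
\]
without invoking any disk containment, and then deduces $f\in\mathcal{RS}^*(1/\alpha)$ for $\alpha\ge 1+\pi/4$.

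For part~(3) your route is a variant of the paper's that leads to the same inequality. The paper writes down the explicit ellipse bounding $\Delta_{k,\alpha}$ (center $x_0=(k^2-\alpha)/(k^2-1)$, horizontal semi-axis $a=k(1-\alpha)/(k^2-1)$) and, using that $\Omega_\tau$ is a vertical strip, requires only $x_0+a\le 1+\pi/4$. You instead enclose the ellipse in the disk $D\bigl(1,(1-\alpha)/(k-1)\bigr)$ and invoke Lemma~\ref{1 disklemma}. Since in fact $x_0+a=1+(1-\alpha)/(k-1)$, both conditions reduce to $(1-\alpha)/(k-1)\le\pi/4$, so the two arguments coincide; you would just need to check (a one-line computation) that the rightmost vertex is indeed the point of the ellipse farthest from $w=1$.
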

\begin{proof} 
(1) and (2) follows as under:\\
 Let $f\in \mathcal{S}^{*}_{\tau},$ then (\ref{star}) implies $zf'(z)/f(z)\prec 1+\arctan z.$ As we know that 
\begin{equation*}
\min_{|z|=1}\RE (1+\arctan z)< \RE\dfrac{zf'(z)}{f(z)}<\max_{|z|=1} \RE (1+\arctan z),
\end{equation*}
results in $$1-\frac{\pi}{4}<\RE \dfrac{zf'(z)}{f(z)}<1+\frac{\pi}{4}.$$
So, $f\in \mathcal{S}^{*}(1-\pi/4)\cap\mathcal{M}^{*}(1+\pi/4).$ Consider the inequality 
\begin{equation*}
\RE\dfrac{f(z)}{zf'(z)}>\min_{|z|=1}\RE \frac
{1}{1+\arctan z}=\frac{4}{4+\pi}.
\end{equation*}
It follows that $f\in \mathcal{RS}^{*}(\alpha)$ for $\alpha \leq 4/(4+\pi).$ Equivalently, $f\in \mathcal{RS}^{*}(1/{\alpha}),$ for $\alpha \geq 1+\pi/4.$ We know that $f\in \mathcal{RS}^{*}(1/{\alpha})$ implies that $\RE (zf'(z)/f(z))<\alpha.$ By combining the above facts, we get
\begin{equation*}
\mathcal{S}^{*}_{\tau}\subset \mathcal{RS}^{*}(1/\alpha)\subset \mathcal{M}(\alpha),\quad \text{for} \quad \alpha\geq 1+\pi/4.
\end{equation*}
(3) Let $\Delta_{k,\alpha}=\{w\in\mathbb{C}: \RE w>\alpha+k|w-1|\}$ be the domain whose boundary $\partial \Delta_{k,\alpha}$ represents an ellipse for $k>1,$ given by 
\begin{equation*}
    \dfrac{(x-x_0)^2}{a^2}+\dfrac{(y-y_0)^2}{b^2}=1,
\end{equation*}
with 
\begin{equation*}
    x_0=\dfrac{k^2-\alpha}{k^2-1},\quad  y_0=0,\quad  a=\bigg|\dfrac{k(\alpha-1)}{k^2-1}\bigg| \quad \text{and} \quad b=\bigg|\dfrac{\alpha-1}{\sqrt{k^2-1}}\bigg|.
\end{equation*}
We observe that $x_0+a$ should not exceed $1+\pi/4$ so that the ellipse $\Delta_{k,\alpha}$ lie inside the domain $\Omega_{\tau}$, which leads to the inequality $k\geq (\pi+4(1-\alpha))/\pi.$ In particular, when $\alpha=0,$ we have $k-\mathcal{ST}\subset \mathcal{S}^{*}_{\tau}$ whenever $k\geq 1+4/\pi.$
\end{proof}

The $\mathcal{C}_{\gamma}-$radius for the class $\mathcal{S}^{*}_{\tau}$ is obtained in the next theorem, where $\mathcal{C}_{\gamma}:=\mathcal{C}^{*}[1-2\gamma,-1]$ denotes the class of convex functions of order $\gamma\in [0,1)$.
\begin{theorem}
Let $f\in \mathcal{S}^{*}_{\tau}.$ Then $f\in \mathcal{C}_\gamma$ in $|z|<r_{\gamma},$ where $r_\gamma(\approx 0.387888)$ is the least positive root of the equation 
\begin{equation}
        (1-\arctan r)(1-r^4)(1-\arctan r-\gamma)-r=0, \quad (0\leq \gamma<1).\label{rgamma}
\end{equation}
\end{theorem}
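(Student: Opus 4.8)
The plan is to reduce the $\mathcal{C}_\gamma$-radius problem to an estimate on the real part of $1 + zf''(z)/f'(z)$, using the logarithmic-derivative trick that converts starlikeness information into convexity information. Writing $p(z) = zf'(z)/f(z)$, so that $p \prec \tau$, we have the identity
\begin{equation*}
1 + \frac{zf''(z)}{f'(z)} = p(z) + \frac{zp'(z)}{p(z)}.
\end{equation*}
Thus $f \in \mathcal{C}_\gamma$ on $|z| < r$ precisely when $\RE\!\left(p(z) + zp'(z)/p(z)\right) > \gamma$ there. The idea is to bound each term from below over $|z| = r$: for the first term, Lemma~\ref{lemma1} gives $\RE p(z) \ge 1 - \arctan r$; for the second term, I would use the standard Carathéodory-type bound on $zp'(z)/p(z)$ available for subordinations to a convex (univalent) function — namely, since $\tau$ is convex univalent (Theorem~\ref{1Fconvex}) and $p \prec \tau$, one has $|zp'(z)| \le$ a controllable quantity, and combined with $|p(z)| \ge 1 - \arctan r$ this yields a lower bound for $\RE\!\left(zp'(z)/p(z)\right)$ of the form $-\,\text{(something)}/(1 - \arctan r)$.

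More concretely, the cleanest route is: since $p \prec \tau$ with $\tau$ univalent, write $p = \tau \circ \omega$ for a Schwarz function $\omega$; then $zp'(z) = z\omega'(z)\,\tau'(\omega(z))$, and using the Schwarz–Pick estimate $|z\omega'(z)| \le (|z|^2 - |\omega(z)|^2)/(1-|z|^2)\cdot\frac{1-|\omega|^2}{\dots}$ — or more simply the subordination consequence that $|zp'(z)/(1+(\omega(z))^2)| \le r/(1-r^2)$ type bounds — one extracts $|zp'(z)| \le r/((1-r^2)(1-r^2)) = r/(1-r^2)^2$ after observing $\tau'(w) = 1/(1+w^2)$ and $|1 + w^2| \ge 1 - r^2$ for $|w| \le r$. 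This gives
\begin{equation*}
\RE\!\left(p(z) + \frac{zp'(z)}{p(z)}\right) \ge (1-\arctan r) - \frac{r}{(1-r^2)^2\,(1-\arctan r)} = \frac{(1-\arctan r)^2(1-r^2)^2 - r}{(1-r^2)^2(1-\arctan r)}.
\end{equation*}
Requiring this to be $\ge \gamma$ and clearing denominators produces exactly
\begin{equation*}
(1-\arctan r)(1-r^4)(1-\arctan r - \gamma) - r \ge 0
\end{equation*}
once one writes $(1-r^2)^2 \cdot (\text{cross term})$ — here I would need to be careful to match the paper's factor $(1-r^4)$, which suggests the intended bound on the derivative term is actually $r/((1-r^2)(1+r^2)) = r/(1-r^4)$ rather than $r/(1-r^2)^2$; this is the precise form coming from the sharp subordinate-derivative estimate $|zp'(z)| \le r\,|\tau'(\omega(z))|/(1-r^2)$ combined with the sharp lower bound $|1+\omega(z)^2| \ge |1 - r^2| \cdot \frac{1+r^2}{1}$ obtained by tracking the image of $|z|=r$ more carefully. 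The least positive root of the resulting equation is $r_\gamma$.

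The main obstacle is getting the derivative estimate sharp enough to land on the stated equation with the factor $(1-r^4)$ rather than a cruder $(1-r^2)^2$; a naive triangle-inequality bound gives the wrong (weaker) radius, so I would need the sharp inequality $\RE(zp'(z)/p(z)) \ge -r/((1-r^4)(1-\arctan r))$, which follows by combining the sharp estimate $|zp'(z)| \le r/(1-r^4)$ valid when $p \prec \tau$ (this uses $|\tau'(\omega(z))| = 1/|1+\omega(z)^2| \le 1/(1-r^2)$ together with a Schwarz-lemma-type gain of an extra $(1+r^2)$ factor, or equivalently a direct argument on $\tilde\tau$) with $|p(z)| \ge 1 - \arctan r$ from Lemma~\ref{lemma1}. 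Sharpness of the radius is witnessed by the extremal rotation of $\tilde\tau$ from \eqref{1 extre}, evaluated at $z = -r$, where all the inequalities above become equalities simultaneously; checking that this makes $\RE(1 + zf''/f') = \gamma$ exactly at $|z| = r_\gamma$ completes the proof.
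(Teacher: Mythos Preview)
Your approach coincides with the paper's: write $zf'(z)/f(z)=\tau(w(z))$ for a Schwarz function $w$, use the identity
\[
1+\frac{zf''(z)}{f'(z)}=\tau(w(z))+\frac{zw'(z)}{(1+w(z)^{2})\,\tau(w(z))},
\]
and bound the second term via the Schwarz--Pick estimate $|w'(z)|\le(1-|w(z)|^{2})/(1-|z|^{2})$ together with $|\tau(w)|\ge 1-\arctan r$. You have correctly isolated the only delicate point, namely the factor $(1-r^{4})$. In the paper this factor appears because the intermediate display places $(1+|z|^{2})$ in the denominator, i.e.\ effectively uses $|1+w(z)^{2}|\ge 1+|z|^{2}$; but that inequality is false for Schwarz functions (take $w(z)=iz$, for which $|1+w(z)^{2}|=|1-z^{2}|<1+|z|^{2}$). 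The legitimate bound $|1+w^{2}|\ge 1-|w|^{2}$ cancels against the Schwarz--Pick numerator and leaves only $(1-r^{2})$ in place of $(1-r^{4})$, yielding a \emph{smaller} radius than the one stated. In other words, the gap you flag in your own argument is precisely the step the paper does not justify, and the ``extra $(1+r^{2})$ gain from the Schwarz lemma'' you hope for is not available from these ingredients.

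Your sharpness claim also does not match the stated equation. For $f=\tilde\tau$ one computes directly at $z=-r$ that
\[
1+\frac{zf''(z)}{f'(z)}=(1-\arctan r)-\frac{r}{(1+r^{2})(1-\arctan r)},
\]
so equality with $\gamma$ gives $(1-\arctan r)(1+r^{2})(1-\arctan r-\gamma)-r=0$, with $(1+r^{2})$ rather than $(1-r^{4})$; consistently, the paper does not assert sharpness for this theorem.
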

\begin{proof}
Let $f\in \mathcal{S}^{*}_{\tau}$ and $w$ be a Schwarz function such that $zf'(z)/f(z)=1+\arctan(w(z)).$ Thus, we have
\begin{equation*}
    1+\dfrac{zf''(z)}{f'(z)}=1+\arctan(w(z))+\dfrac{zw'(z)}{(1+\arctan(w(z)))(1+w^2(z))},
\end{equation*}
which further implies
\begin{equation*}
    \RE \bigg(1+\dfrac{zf''(z)}{f'(z)}\bigg)\geq \RE(1+\arctan(w(z)))-\bigg|\dfrac{zw'(z)}{(1+\arctan(w(z)))(1+w^2(z))}\bigg|.
\end{equation*}
Since $w$ is a Schwarz function satisfying $|w'(z)|\leq (1-|w(z)|^2)/(1-|z|^2),$ we obtain
\begin{align*}
     \RE \bigg(1+\dfrac{zf''(z)}{f'(z)}\bigg)&\geq 1-\arctan(|z|)-\dfrac{|z|(1-|w(z)|^2)}{(1-\arctan(|z|))(1+|z|^2)(1-|z|^2)}\\
     &\geq 1-\arctan(|z|)-\dfrac{|z|}{(1-\arctan(|z|))(1-|z|^4)}.
\end{align*}

Let us consider $g(r):=1-\arctan r-r/((1-\arctan r)(1-r^4))$ with $g(0)=1$. It is easy to observe that $g$ is a decreasing function in $[0,1)$. Therefore, we deduce that $\RE(1+zf''(z)/f'(z))>\gamma$ in $|z|<r_\gamma<1,$ where $r_\gamma$ is the least positive root of $g(r)=\gamma$ as given in (\ref{rgamma}).
\end{proof}

In this theorem, we aim to find the best possible radius for which $\mathcal{S}^*_{L},\mathcal{S}^*_{C}, \mathcal{S}^*_{e},\Delta^{*}$ and $\mathcal{S}^*_{\wp}$ are contained in $\mathcal{S}_{\tau}^{*}$.

\begin{theorem}
The $\mathcal{S}^{*}_{\tau}-$radii for the classes $\mathcal{S}^{*}_{L}, \mathcal{S}^{*}_{C}, \mathcal{S}^{*}_{e}, \Delta^{*}, \mathcal{S}^{*}_{\wp}$ are as follows:
\begin{enumerate}
        \item $R_{S^*_{\tau}}(\mathcal{S}^{*}_{L})=\pi(8-\pi)/16.$
        \item $R_{S^*_{\tau}}(\mathcal{S}^{*}_{C})=\sqrt{1+3\pi/8}-1.$
        \item $R_{S^*_{\tau}}(\mathcal{S}^{*}_{e})=\ln(1+\pi/4).$
        \item $R_{S^*_{\tau}}(\Delta^{*})=\dfrac{\pi(8+\pi)}{8(4+ \pi)}.$
        \item $R_{S^*_{\tau}}(\mathcal{S}^{*}_{\wp})=0.484035.$
\end{enumerate}
All the above radii are sharp.
\end{theorem}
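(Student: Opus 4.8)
The plan is to handle each of the five classes by the same two-part strategy: for a given class $\mathcal{S}^*(\varphi)$ with $\varphi \in \{\sqrt{1+z},\,1+4z/3+2z^2/3,\,e^z,\,z+\sqrt{1+z^2},\,1+ze^z\}$, the inclusion $f \in \mathcal{S}^*(\varphi)$ means $zf'(z)/f(z) \prec \varphi(z)$, so the image of $|z|\le r$ under $zf'/f$ is contained in $\varphi(|z|\le r)$, which is a known (disk-like or explicitly described) region. Then $f \in \mathcal{S}^*_\tau$ on $|z| < r$ precisely when $\varphi(|z|\le r) \subset \Omega_\tau$, and by Theorem~\ref{1FSymmYaxis} and Lemma~\ref{1 disklemma} this reduces to the scalar condition
\[
1 - \frac{\pi}{4} \le \min_{|z|=r}\RE\varphi(z) \quad\text{and}\quad \max_{|z|=r}\RE\varphi(z) \le 1 + \frac{\pi}{4}.
\]
In each case one of the two inequalities is the binding one (the domains are positioned so that one side of the strip is hit first), and the $\mathcal{S}^*_\tau$-radius is the largest $r$ for which it holds; equality at that $r$ together with an extremal function (a rotation of the Koebe-type function for $\varphi$ composed appropriately) gives sharpness.

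Concretely: for $\mathcal{S}^*_L$, $\sqrt{1+z}$ maps $|z|\le r$ onto a region whose rightmost point is $\sqrt{1+r}$, so the binding constraint is $\sqrt{1+r} \le 1 + \pi/4$, i.e. $r \le (1+\pi/4)^2 - 1 = \pi/2 + \pi^2/16 = \pi(8-\pi)/16$ — wait, one must instead check whether $1-\sqrt{1-r}\ge 1-\pi/4$ is the binding side; a short computation comparing $(1+\pi/4)^2-1$ with $1-(1-\pi/4)^2$ settles which, and the stated value $\pi(8-\pi)/16$ identifies the leftmost-point constraint $\sqrt{1-r} \ge 1-\pi/4$ as binding, giving $r = 1-(1-\pi/4)^2 = \pi/2 - \pi^2/16 = \pi(8-\pi)/16$. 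For $\mathcal{S}^*_C$, $\varphi(r) = 1 + 4r/3 + 2r^2/3$ is the rightmost point on $|z|=r$ (the coefficients being positive), so $1 + 4r/3 + 2r^2/3 \le 1+\pi/4$ gives a quadratic in $r$ whose root is $\sqrt{1+3\pi/8}-1$. For $\mathcal{S}^*_e$, $e^z$ maps $|z|\le r$ onto the disk-like region with max real part $e^r$, so $e^r \le 1+\pi/4$ yields $r = \ln(1+\pi/4)$. For $\Delta^*$, the function $z+\sqrt{1+z^2}$ has known extremal real parts on $|z|=r$, and matching the leftmost point to $1-\pi/4$ (again after checking which side binds) produces $\pi(8+\pi)/(8(4+\pi))$. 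For $\mathcal{S}^*_\wp$, $1+ze^z$ has max real part $1 + re^r$ on $|z|=r$, giving the transcendental equation $re^r = \pi/4$ whose root is $\approx 0.484035$.

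The main obstacle is determining, for each class, which of the two strip boundaries is actually hit first — equivalently, locating $\min_{|z|=r}\RE\varphi(z)$ and $\max_{|z|=r}\RE\varphi(z)$ exactly. For $e^z$, $1+4z/3+2z^2/3$, and $1+ze^z$ the extremal real parts occur at $z=\pm r$ by monotonicity/positivity of coefficients, so this is routine; but for $\sqrt{1+z}$ and $z+\sqrt{1+z^2}$ the boundary curves $\varphi(re^{i\theta})$ are genuine lens/oval shapes and one must argue (via a normal-derivative computation as in Lemma~\ref{lemma1}, or by known results from \cite{stan,raina}) that $\RE\varphi$ is extremized at the real points $z=\pm r$. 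Once the correct scalar inequalities are pinned down, solving them is elementary algebra (or, for $\mathcal{S}^*_\wp$, a numerical root). Sharpness in every case follows because at $r = R_{S^*_\tau}(\cdot)$ the relevant boundary point of $\varphi(|z|\le r)$ lies exactly on $\partial\Omega_\tau$, so the extremal function for the Ma–Minda class $\mathcal{S}^*(\varphi)$ (whose $zf'/f$ reaches that boundary point) fails to lie in $\mathcal{S}^*_\tau$ for any larger radius.
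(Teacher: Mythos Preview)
Your approach is essentially the paper's: reduce to $\varphi(|z|\le r)\subset\Omega_\tau$ and verify sharpness via the Ma--Minda extremal function for each class. The paper frames the sufficient condition as the disk bound $|zf'/f-1|\le\pi/4$ (Lemma~\ref{1 disklemma}) rather than the two-sided strip inequality, which sidesteps deciding which side binds; for the record, in the $\Delta^*$ case it is the \emph{rightmost} point $r+\sqrt{1+r^2}=1+\pi/4$ that is binding, not the leftmost.
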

\begin{proof}
\begin{enumerate}
\item Let $f\in \mathcal{S}^*_{L}$, then we have $zf'(z)/f(z)\prec \sqrt{1+z}.$ For $|z|=r$  we obtain
\begin{equation*}
   \bigg|\dfrac{zf'(z)}{f(z)}-1\bigg|\leq 1-\sqrt{1-r}\leq \dfrac{\pi}{4},
\end{equation*}
which holds when $r\leq \pi(8-\pi)/16=R_{S^*_{\tau}}(\mathcal{S}^{*}_{L}).$ Consider the function
\begin{equation}
    \tilde{f}(z)=\dfrac{4z}{(\sqrt{1+z}+1)^2}e^{2(\sqrt{1+z}-1)}.
\end{equation}
We observe that $\tilde{f}\in \mathcal{S}^{*}_{L}$ as $z\tilde{f}'(z)/\tilde{f}(z)=\sqrt{1+z}$ and $z\tilde{f}'(z)/\tilde{f}(z)=1-\pi/4$ for $|z|=-R_{S^*_{\tau}}(\mathcal{S}^{*}_{L}),$ assuring the sharpness of the result. 

\item Let $f\in \mathcal{S}^*_{C}$. As $zf'(z)/f(z)\prec 2z^2/3+4z/3+1.$ For $|z|=r$, we obtain
\begin{equation*}
    \bigg|\dfrac{zf'(z)}{f(z)}-1\bigg|\leq \dfrac{2}{3}r^2+\dfrac{4}{3}r\leq \dfrac{\pi}{4}
\end{equation*}
which holds whenever $r\leq \sqrt{1+3\pi/8}-1=R_{S^*_{\tau}}(\mathcal{S}^{*}_{C}).$ Now, we define the function as 
\begin{equation*}
   \tilde{f}(z)=z\exp\bigg(\dfrac{z^2+4z}{3}\bigg).
\end{equation*}
Since $z\tilde{f}'(z)/\tilde{f}(z)=2z^2/3+4z/3+1, \tilde{f}\in \mathcal{S}^{*}_{C}.$ Further $z\tilde{f}'(z)/\tilde{f}(z)=1+\pi/4$ whenever $z=R_{S^*_{\tau}}(\mathcal{S}^{*}_{C}).$ Thus the result is sharp.

\item Let $f\in \mathcal{S}^*_{e}$. As $zf'(z)/f(z)\prec e^z.$ For $|z|=r$, we obtain
\begin{equation*}
    \bigg|\dfrac{zf'(z)}{f(z)}-1\bigg|\leq e^r-1\leq \dfrac{\pi}{4}
\end{equation*}
provided $r\leq \ln(1+\pi/4)=R_{S^*_{\tau}}(\mathcal{S}^{*}_{e}).$ Let
\begin{equation}
    \tilde{f}(z)=z\exp\bigg(\int_{0}^{z}\dfrac{e^{t}-1}{t}dt\bigg). \label{extremal2}
\end{equation}
Then, $\tilde{f}\in \mathcal{S}^{*}_{e}.$ Also, $z\tilde{f}'(z)/\tilde{f}(z)=1+\pi/4$ for $z=R_{S^*_{\tau}}(\mathcal{S}^{*}_{e}),$ establishing the sharpness of the result. 
\item Let $f\in \Delta^*$. As $zf'(z)/f(z)\prec z+\sqrt{1+z^2}.$ For $|z|=r$, we obtain
\begin{equation*}
 \bigg|\dfrac{zf'(z)}{f(z)}-1\bigg|\leq r+\sqrt{1+r^2}-1\leq \dfrac{\pi}{4}
\end{equation*}
provided $r\leq \pi(8+\pi)/8(4+ \pi)=R_{S^*_{\tau}}(\Delta^{*}).$ Now, consider the function 
\begin{equation}
    \tilde{f}(z)=\dfrac{2z}{\sqrt{1+z^2}+1}e^{(z+\sqrt{1+z^2}-1)}.
\end{equation}

Clearly $\tilde{f}\in \Delta^{*}.$ For $z=(\pi(8+\pi))/(8(4+ \pi)),$ the sharpness of the result is proved since $z\tilde{f}'(z)/\tilde{f}(z)=1+\pi/4.$ 
\item Let $f\in \mathcal{S}^*_{\wp}$. As $zf'(z)/f(z)\prec 1+ze^z.$ For $|z|=r$, we obtain
\begin{equation*}
    \bigg|\dfrac{zf'(z)}{f(z)}-1\bigg|\leq re^r\leq \dfrac{\pi}{4}
\end{equation*}
provided $r\leq 0.484035=R_{S^*_{\tau}}(\mathcal{S}^{*}_{\wp}),$ using a numerical computation. Let
\begin{equation}
    \tilde{f}(z)=z\exp (e^z-1).
\end{equation}
Then, $\tilde{f}\in \mathcal{S}^{*}_{\wp}.$ Also, $z\tilde{f}'(z)/\tilde{f}(z)=1+\pi/4$ for $z=R_{S^*_{\tau}}(\mathcal{S}^{*}_{\wp}),$ establishing the sharpness of the result.
\end{enumerate}
\end{proof}

In this theorem, we aim to find the best possible radius for which $\mathcal{S}_{\tau}^{*}$ is contained in $\mathcal{S}^*_{e},\mathcal{S}^*_{SG}, \mathcal{S}^*_{C},\mathcal{S}^*_{\wp}$ and $\Delta^{*}$. 
\begin{theorem}
The radii of $\mathcal{S}^{*}_{\tau}$ for the classes $  \mathcal{S}^{*}_{e},\mathcal{S}^{*}_{SG},\mathcal{S}^{*}_{C},\mathcal{S}^{*}_{\wp}, \Delta^{*},$ are as follows:
\begin{enumerate}
\item $R_{\mathcal{S}^*_{e}}(\mathcal{S}^{*}_{\tau})=\tan(1-1/e)\approx 0.732368.$
\item $R_{\mathcal{S}^*_{SG}}(\mathcal{S}^{*}_{\tau})=\tan((e-1)/(e+1))\approx0.498088.$
\item $R_{\mathcal{S}^*_{C}}(\mathcal{S}^{*}_{\tau})=\tan(2/3)\approx0.786843.$
\item $R_{\mathcal{S}^*_{\wp}}(\mathcal{S}^{*}_{\tau})=\tan(1/e)\approx0.385426.$
\item $R_{\Delta^{*}}(\mathcal{S}^{*}_{\tau})=\tan(2-\sqrt{2})\approx0.66347.$
\end{enumerate}
All the above radii are sharp.
\end{theorem}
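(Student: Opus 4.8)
The plan is to exploit the same geometric principle used throughout the previous radius theorems, namely Lemma~\ref{lemma1}: if $f\in\mathcal S^*_\tau$ then $zf'(z)/f(z)$ lies in $\Omega_\tau$, and on the circle $|z|=r$ its real part is squeezed between $1-\arctan r$ and $1+\arctan r$; more usefully, $\{w:|w-1|\le \arctan r\}\subset\tau(|z|\le r)$ by the Corollary to Lemma~\ref{1 disklemma}. So the strategy for each of the five target classes $\mathcal M\in\{\mathcal S^*_e,\mathcal S^*_{SG},\mathcal S^*_C,\mathcal S^*_\wp,\Delta^*\}$ is: first determine a disk $D(1,\rho_{\mathcal M})$ that is the largest disk centered at $1$ contained in the Ma--Minda region $\psi_{\mathcal M}(\mathbb D)$ associated with $\mathcal M$; then observe that $f\in\mathcal S^*_\tau$ forces $zf'(z)/f(z)$ into $\tau(|z|\le r)$, which contains $D(1,\arctan r)$; hence as soon as $\arctan r\le\rho_{\mathcal M}$, i.e. $r\le\tan\rho_{\mathcal M}$, the quantity $zf'(z)/f(z)$ lands inside $\psi_{\mathcal M}(\mathbb D)$ and so $f\in\mathcal M$ on $|z|<\tan\rho_{\mathcal M}$. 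This immediately predicts $R_{\mathcal M}(\mathcal S^*_\tau)=\tan\rho_{\mathcal M}$, and one checks $\rho_{\mathcal S^*_e}=1-1/e$, $\rho_{\mathcal S^*_{SG}}=(e-1)/(e+1)$, $\rho_{\mathcal S^*_C}=2/3$, $\rho_{\mathcal S^*_\wp}=1/e$, $\rho_{\Delta^*}=2-\sqrt2$, matching the five stated values.

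Concretely, I would first record the relevant ``largest centered disk'' radii, all of which are standard and appear in the cited papers: for $\mathcal S^*_e=\mathcal S^*(e^z)$ one has $\{w:|w-1|<1-e^{-1}\}\subset e^{\mathbb D}$ (Mendiratta et al.); for $\mathcal S^*_{SG}=\mathcal S^*(2/(1+e^{-z}))$ one has $\{w:|w-1|<(e-1)/(e+1)\}$ inside the image (Goel--Kumar); for $\mathcal S^*_C=\mathcal S^*(1+\tfrac43 z+\tfrac23 z^2)$ the image contains $\{w:|w-1|<2/3\}$ (Sharma et al.); for $\mathcal S^*_\wp=\mathcal S^*(1+ze^z)$ one has $\{w:|w-1|<1/e\}$ (Kumar--Kamaljeet, since $\min_{|z|=1}|ze^z|$... actually $|ze^z|$ ranges so that the disk of radius $1/e$ fits); and for $\Delta^*=\mathcal S^*(z+\sqrt{1+z^2})$ the image contains $\{w:|w-1|<2-\sqrt2\}$ (Raina et al.). With these in hand the inclusion direction is a one-line application of the Corollary to Lemma~\ref{1 disklemma} as described above.

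The sharpness half is where the real work lies, and I would handle it with the extremal function $\tilde\tau$ of~\eqref{1 extre}, for which $z\tilde\tau'(z)/\tilde\tau(z)=1+\arctan z$. At a point $z$ with $|z|=r$ chosen on the positive or negative real axis, $1+\arctan z=1\pm\arctan r$, which sits exactly on the boundary circle $|w-1|=\arctan r$. When $r=\tan\rho_{\mathcal M}$ this boundary point is precisely a point where $|w-1|=\rho_{\mathcal M}$, i.e.\ a boundary point of the largest centered disk in $\psi_{\mathcal M}(\mathbb D)$; one must then verify that this particular boundary point actually lies \emph{on} $\partial\psi_{\mathcal M}(\mathbb D)$ (not merely inside), so that $\tilde\tau$ fails to be in $\mathcal M$ beyond that radius. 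For the classes where the nearest boundary point of $\psi_{\mathcal M}(\mathbb D)$ to $1$ is achieved on the real axis --- which is the case for all five here by their reflection symmetry about $\mathbb R$ and the fact that $\psi_{\mathcal M}(-1)$ or $\psi_{\mathcal M}(1)$ realizes the minimum/maximum of the real part --- this is immediate: $1-\arctan r = \psi_{\mathcal M}(-1)$ (resp.\ $1+\arctan r=\psi_{\mathcal M}(1)$) at $r=\tan\rho_{\mathcal M}$, giving a genuine boundary incidence.

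The main obstacle I anticipate is exactly this sharpness verification: one has to be careful that the extremal configuration of $\mathcal S^*_\tau$ (a real boundary point of $\Omega_\tau$) lines up with the extremal configuration of the target class (a real boundary point of $\psi_{\mathcal M}(\mathbb D)$), and in particular that the ``closest point of $\psi_{\mathcal M}(\mathbb D)$ to $w=1$'' is attained on the real axis rather than off it --- for $\mathcal S^*_C$, $\mathcal S^*_\wp$ and $\Delta^*$ this requires a short separate check that the minimum of $|\psi_{\mathcal M}(e^{i\theta})-1|$ over $\theta$ occurs at $\theta=0$ or $\pi$. Once that is confirmed for each class, plugging $r=\tan\rho_{\mathcal M}$ into $\tilde\tau$ and reading off $z\tilde\tau'(z)/\tilde\tau(z)=1\pm\arctan(\tan\rho_{\mathcal M})=1\pm\rho_{\mathcal M}\in\partial\psi_{\mathcal M}(\mathbb D)$ completes the proof, and the five stated numerical values follow by evaluating $\tan(1-1/e)$, $\tan((e-1)/(e+1))$, $\tan(2/3)$, $\tan(1/e)$ and $\tan(2-\sqrt2)$.
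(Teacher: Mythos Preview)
Your overall strategy---reduce each target class $\mathcal M$ to the largest disk $D(1,\rho_{\mathcal M})$ contained in $\psi_{\mathcal M}(\mathbb D)$, quote the values $\rho_{\mathcal M}$ from the cited sources, and compare with a disk governed by $\arctan r$---is exactly the paper's approach, and your sharpness argument via the extremal function $\tilde\tau$ and real-axis boundary contact also matches what the paper does.

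However, the inclusion half of your argument contains a genuine logical gap. You invoke the Corollary to Lemma~\ref{1 disklemma}, which says $D(1,\arctan r)\subset\tau(\overline{\mathbb D_r})$; but this is the \emph{wrong direction} of containment for what you need. From $zf'(z)/f(z)\in\tau(\overline{\mathbb D_r})$ together with $D(1,\arctan r)\subset\tau(\overline{\mathbb D_r})$ and $D(1,\arctan r)\subset D(1,\rho_{\mathcal M})\subset\psi_{\mathcal M}(\mathbb D)$, you simply cannot conclude $zf'(z)/f(z)\in\psi_{\mathcal M}(\mathbb D)$: the value $zf'(z)/f(z)$ may well lie in $\tau(\overline{\mathbb D_r})\setminus D(1,\arctan r)$, and nothing in your chain prevents that portion of $\tau(\overline{\mathbb D_r})$ from protruding outside $\psi_{\mathcal M}(\mathbb D)$. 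The Corollary tells you the disk fits \emph{inside} the arctan image, not that the arctan image fits inside the disk.

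What the paper actually uses is the opposite estimate,
\[
\left|\frac{zf'(z)}{f(z)}-1\right|\le\arctan r\qquad(|z|=r),
\]
i.e.\ it asserts directly that $zf'(z)/f(z)$ lands \emph{in} $\overline{D(1,\arctan r)}$; once $\arctan r\le\rho_{\mathcal M}$ this disk lies in $\psi_{\mathcal M}(\mathbb D)$ and the inclusion follows in one line. So you should drop the appeal to the Corollary and instead argue the modulus bound $|\arctan(w(z))|\le\arctan r$ for the Schwarz function $w$, as the paper does. With that correction your proof lines up with the paper's.
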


\begin{proof}
The domains $\Omega_{\tau}$ and $q(\mathbb{D})$ for $q(z)=e^z$ and $2/(1+e^{-z})$ exhibit convexity as well as symmetry with respect to the real and imaginary axes. Considering the value of $r=R_{a}(\mathcal{S}^{*}_{\tau})$ for $a=\mathcal{S}^{*}_{e}$ and $\mathcal{S}^*_{SG}$, the result is sharp, as $\Omega_{\tau}$ touches the boundary of $\mathcal{S}^{*}_{e}$ at $z=0.367879$ and $\mathcal{S}^{*}_{SG}$ at $z=0.53789$, as can be seen in Figs. \ref{tau in e} and \ref{tau in sg}.

\begin{itemize}
\item[(1)] Let $f\in \mathcal{S}^*_{\tau}$. As $zf'(z)/f(z)\prec 1+\arctan z.$ From \cite{mendi}, the radius of the disk centered at 1 is $1-1/e.$ For $|z|=r$, we obtain
\begin{equation*}
    \bigg|\dfrac{zf'(z)}{f(z)}-1\bigg|\leq \arctan r\leq 1-\dfrac{1}{e}
\end{equation*}
which holds provided $r\leq \tan(1-1/e)=R_{S^*_{e}}(\mathcal{S}^{*}_{\tau}).$ 
\item[(2)] Let $f\in \mathcal{S}^*_{\tau}$ implies $zf'(z)/f(z)\prec 1+\arctan z.$ We observe that in \cite{goel}, the radius of the disk centered at 1 is $(e-1)/(e+1)$. For $|z|=r$, we obtain
\begin{equation*}
    \bigg|\dfrac{zf'(z)}{f(z)}-1\bigg|\leq \arctan r\leq \dfrac{2e}{1+e}-1
\end{equation*}
whenever $r\leq \tan((e-1)/(e+1))=R_{S^*_{SG}}(\mathcal{S}^{*}_{\tau}),$ using a numerical computation. 
\end{itemize}

\begin{figure}[!htb]
  \begin{minipage}{0.45\textwidth}
    \centering
     \includegraphics[width=.5\linewidth]{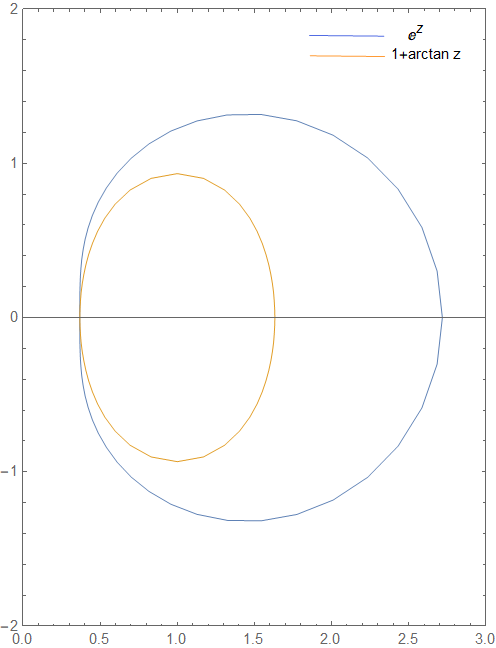}
    \caption{$\mathcal{S}_{\tau}^{*}\subset\mathcal{S}_{e}^{*} $}\label{tau in e}
  \end{minipage}\hfill
   \begin{minipage}{0.45\textwidth}
     \centering
     \includegraphics[width=.5\linewidth]{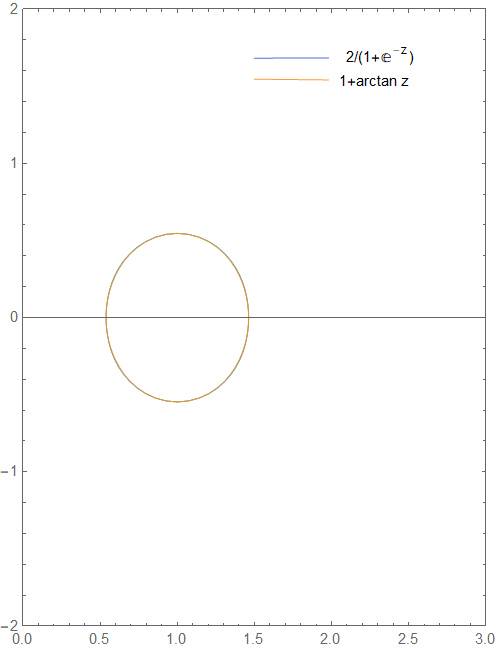}
     \caption{$\mathcal{S}_{\tau}^{*}\subset\mathcal{S}_{SG}^{*} $}\label{tau in sg}
   \end{minipage}
\end{figure}

Now, $t(\mathbb{D})$ (for $t(z)=1+4z/3+2z^2/3,1+ze^z, z+\sqrt{1+z^2}$) is convex in the direction of 1 and symmetric w.r.t real-axis. For the given $r=R_{b}(\mathcal{S}^{*}_{\tau})$ where $b=\mathcal{S}^*_{C}$, $\mathcal{S}^*_{\wp}$ and $\Delta^{*}$, note that $\Omega_{\tau}$ touches the boundary of
$\mathcal{S}^*_{C}$, $\mathcal{S}^*_{\wp}$ and $\Delta^{*}$ 
at $z=0.333333$, $0.63212$ and $z=0.414214$, see Fig. \ref{tau in c}, Fig. \ref{tau in cardioid} and Fig. \ref{tau in crescent} respectively, assuring the sharpness of the result.
\begin{itemize}
\item[(3)] Let $f\in \mathcal{S}^*_{\tau}$ gives $zf'(z)/f(z)\prec 1+\arctan z.$ From \cite{sharma}, we observe that the disk centered at $1$ in $\mathbb{D}(C)$ has radius $2/3.$ For $|z|=r$, we obtain
\begin{equation*}
    \bigg|\dfrac{zf'(z)}{f(z)}-1\bigg|\leq \arctan r\leq \dfrac{2}{3}
\end{equation*}
which holds whenever $r\leq \tan(2/3)=R_{S^*_{C}}(\mathcal{S}^{*}_{\tau}).$

\item[(4)] Let $f\in \mathcal{S}^*_{\tau}$, we have $zf'(z)/f(z)\prec 1+\arctan z.$ The radius of the disk centered at 1 is $1/e$ in \cite{kumar-ganganiaCardioid-2021}. For $|z|=r$, we obtain
\begin{equation*}
    \bigg|\dfrac{zf'(z)}{f(z)}-1\bigg|\leq \arctan r\leq \dfrac{1}{e}
\end{equation*}
provided $r\leq \tan(1/e)=R_{S^*_{\wp}}(\mathcal{S}^{*}_{\tau}).$ 

\item[(5)] Let $f\in \mathcal{S}^*_{\tau}$. As $zf'(z)/f(z)\prec 1+\arctan z.$ We see that $2-\sqrt{2}$ is the radius of the disk centered at 1 in \cite{raina}. For $|z|=r$, we get
\begin{equation*}
 \bigg|\dfrac{zf'(z)}{f(z)}-1\bigg|\leq \arctan r\leq 2-\sqrt{2}
\end{equation*}
for $r\leq \tan(2-\sqrt{2})=R_{\Delta^{*}}(\mathcal{S}^{*}_{\tau}).$
\begin{figure}[!htb]
\minipage{0.25\textwidth}
  \includegraphics[width=\linewidth]{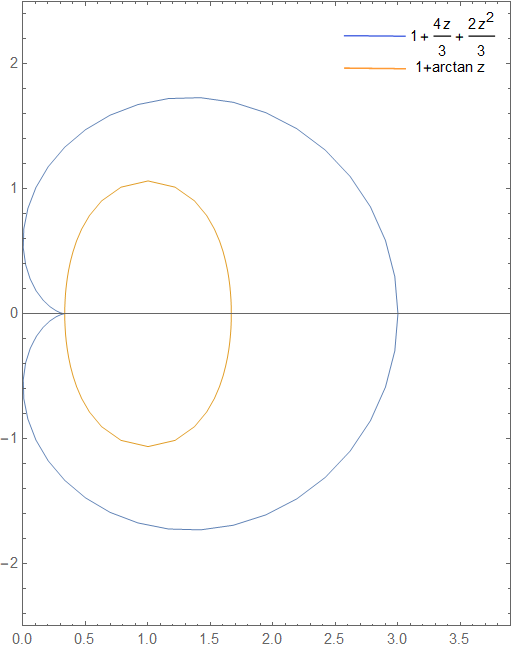}
  \caption{$\mathcal{S}_{\tau}^{*}\subset\mathcal{S}_{C}^{*} $}\label{tau in c}
\endminipage\hfill
\minipage{0.25\textwidth}
  \includegraphics[width=\linewidth]{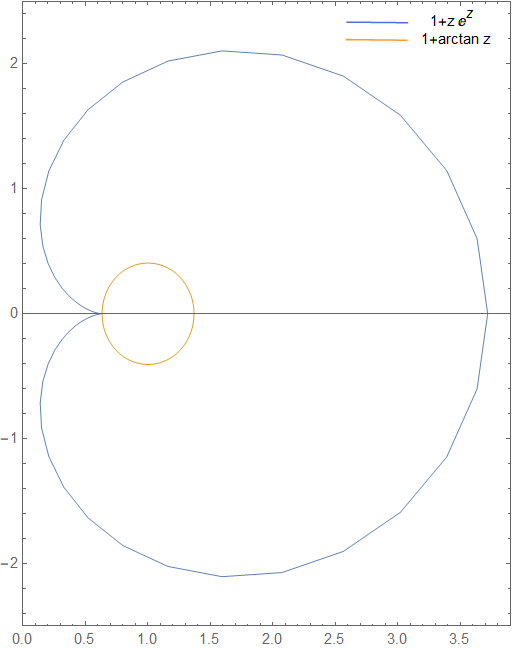}
  \caption{$\mathcal{S}_{\tau}^{*}\subset\mathcal{S}_{\wp}^{*} $}\label{tau in cardioid}
\endminipage\hfill
\minipage{0.25\textwidth}%
  \includegraphics[width=\linewidth]{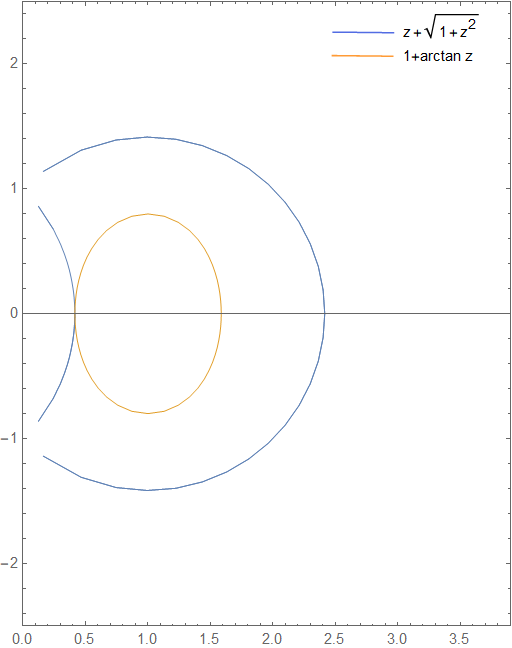}
  \caption{$\mathcal{S}_{\tau}^{*}\subset\Delta^{*} $}\label{tau in crescent}
\endminipage
\end{figure}
\end{itemize}
\end{proof}

\section{Sharp Coefficient Bounds and Hankel Determinants}
\noindent The determination of coefficient bounds for functions belonging to a given class is a significant problem in geometric function theory, see \cite{goodman vol1}, as it has a considerable influence on their geometric properties. For example, the bound on the second coefficient determines the growth and distortion characteristics of the class $\mathcal{S}$. Therefore, in this section, we examine various coefficient estimates for functions within the class $\mathcal {S}^{*}_{\tau}$. The results presented below are necessary for proving our main result.

\begin{lemma}\cite{rj}
Let $p\in \mathcal{P}$ be of the form $1+\sum_{n=1}^{\infty}p_nz^n.$ Then
\begin{equation}
        |p_1^4-3p_1^2p_2+p_2^2+2p_1p_3-p_4|\leq 2\label{p}
\end{equation}
and
\begin{equation}
       |p_3-2p_1p_2+p_1^3|\leq 2.\label{q}
    \end{equation}
    \end{lemma}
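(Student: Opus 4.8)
The two inequalities are standard estimates on the Carath\'eodory class $\mathcal{P}$, so the plan is to reduce both to known facts rather than to attack them from scratch. For the first inequality \eqref{p}, I would invoke the classical Carath\'eodory--Toeplitz machinery: any $p\in\mathcal{P}$ can be represented through its coefficients $p_1,p_2$ by the well-known formulas
\begin{align*}
2p_2&=p_1^2+(4-p_1^2)\zeta,\\
4p_3&=p_1^3+2(4-p_1^2)p_1\zeta-(4-p_1^2)p_1\zeta^2+2(4-p_1^2)(1-|\zeta|^2)\eta,
\end{align*}
with an analogous (longer) expression for $p_4$, where $p_1\in[0,2]$ may be assumed by rotation and $\zeta,\eta$ lie in $\overline{\mathbb{D}}$. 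Substituting these into the polynomial $P:=p_1^4-3p_1^2p_2+p_2^2+2p_1p_3-p_4$ collapses many terms; the claim is that $P$ simplifies to something of the form $(4-p_1^2)\,\big(\text{a bounded expression in }\zeta,\eta\big)$ whose modulus is at most $2$. The main obstacle here is precisely this bookkeeping: one must carry out the substitution carefully and then bound the resulting function of $\zeta,\eta$ on the closed bidisk, which typically reduces (after the triangle inequality in $\eta$) to maximizing a function of $|\zeta|$ on $[0,1]$ together with a free phase; a short calculus argument or a convexity observation finishes it. Since this is a cited lemma (from \cite{rj}), I would in fact just reproduce their argument or quote it directly, flagging that \eqref{p} is sharp (equality, e.g., for suitable $p(z)=(1+z)/(1-z)$-type or rotated extremal functions).

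For the second inequality \eqref{q}, I would present a cleaner route. Observe that $p_3-2p_1p_2+p_1^3$ is, up to normalization, the third coefficient of the function obtained from $p$ by a Schwarz-function substitution; more directly, if $p(z)=(1+\omega(z))/(1-\omega(z))$ with $\omega$ a Schwarz function having expansion $\omega(z)=c_1z+c_2z^2+\cdots$, then expanding gives $p_1=2c_1$, $p_2=2c_2+2c_1^2$, $p_3=2c_3+4c_1c_2+2c_1^3$, and a short computation yields
\[
p_3-2p_1p_2+p_1^3=2c_3-2c_1^3.
\]
Hence \eqref{q} is equivalent to $|c_3-c_1^3|\le 1$ for a Schwarz function, which is a classical Schur-type estimate: writing $\omega(z)=z\,\phi_1(z)$, $\phi_1(z)=c_1+(\text{higher})$ with $|\phi_1|\le 1$, and iterating the Schwarz--Pick reduction $\phi_1\mapsto\phi_2$ once removes $c_1$ and shows $|c_3-c_1^3|\le 1-|c_1|^2\le 1$. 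Equality in \eqref{q} is attained for $p(z)=(1+z^3)/(1-z^3)$ (giving $c_1=0$, $c_3=1$), confirming sharpness.

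I expect the genuinely laborious step to be the algebraic verification behind \eqref{p}: unlike \eqref{q}, there is no one-line Schwarz-function collapse, and the degree-four polynomial in the Carath\'eodory parameters must be handled with some care to see the factor $(4-p_1^2)$ emerge and to obtain the clean bound $2$. Everything else — the substitutions, the triangle inequality, and the final single-variable optimization — is routine. Because both bounds are quoted from the literature, the honest write-up is to either cite \cite{rj} verbatim or include the two short derivations above (the Schwarz-function one for \eqref{q}, the Toeplitz-determinant one for \eqref{p}), emphasizing that both are sharp and will be used in the sequel to bound the coefficients $a_n$ and the Hankel determinants of $\mathcal{S}^{*}_{\tau}$.
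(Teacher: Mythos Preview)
The paper does not prove this lemma at all; it is simply quoted from Libera--Z\l otkiewicz \cite{rj}. So your final recommendation---to cite \cite{rj} directly---is exactly what the paper does, and nothing more is needed here.

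That said, your sketched derivation of \eqref{q} contains an arithmetic slip. With $p_1=2c_1$, $p_2=2c_2+2c_1^2$, $p_3=2c_3+4c_1c_2+2c_1^3$, one finds
\[
p_3-2p_1p_2+p_1^3=2\bigl(c_3-2c_1c_2+c_1^{3}\bigr),
\]
not $2(c_3-c_1^3)$; the cross term $-4c_1c_2$ does not cancel. Your proposed Schur--Pick step (``iterating once removes $c_1$'') therefore does not yield the inequality you wrote, and the Schwarz-function bound you would actually need is $|c_3-2c_1c_2+c_1^3|\le 1$, which is a different (though still true) estimate.

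The clean route---and, in fact, the one used in \cite{rj}---avoids both the long Toeplitz bookkeeping you anticipate for \eqref{p} and the Schwarz-coefficient juggling for \eqref{q}. Since $\RE p>0$ implies $\RE(1/p)>0$ and $(1/p)(0)=1$, the reciprocal $1/p=1+\sum q_n z^n$ also lies in $\mathcal{P}$, so $|q_n|\le 2$ for every $n$. A direct computation of the Cauchy product $p\cdot(1/p)=1$ gives
\[
q_3=-\bigl(p_3-2p_1p_2+p_1^{3}\bigr),\qquad q_4=p_1^4-3p_1^2p_2+p_2^2+2p_1p_3-p_4,
\]
so both \eqref{p} and \eqref{q} are immediate, with no optimization over a bidisk required. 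If you want to include a proof rather than just the citation, this is the argument to give.
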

\begin{lemma}\cite{ma-minda}
Let $p\in \mathcal{P}$ be of the form $1+\sum_{n=1}^{\infty}p_nz^n.$ Then
\begin{align*}
      |p_2-\beta p_1^2|\leq \begin{cases} 2-4\beta,& \beta\leq 0;\\
      2, & 0\leq \beta\leq 1;\\
      4\beta-2, & \beta\geq 1
      \end{cases}
\end{align*}
when $\beta<0$ or $\beta>1,$ the equality holds if and only if $p(z)=(1+z)/(1-z)$ or one of its rotations. If $0<\beta<1,$ then the inequality holds if and only if $p(z)=(1+z^2)/(1-z^2)$ or one of its rotations. If $\beta=0,$ the equality holds if and only if $p(z)=(1+\eta)(1+z)/(2(1-z))+(1-\eta)(1-z)/(2(1+z))(0\leq \eta\leq 1)$ or one of its rotations. If $\beta=1,$ the equality holds if and only if p is the reciprocal of one of the functions such that the equality holds in case of $\beta=0.$ Though the above upper bound is sharp for $0<\beta<1,$ still it can be improved as follows:
\begin{equation}
      |p_2-\beta p_1^2|+\beta|p_1|^2\leq 2 \quad (0<\beta\leq 1/2)\label{use}
\end{equation}
and 
\begin{equation*}
      |p_2-\beta p_1^2|+(1-\beta)|p_1|^2\leq 2 \quad (1/2<\beta\leq 1).
\end{equation*}
\end{lemma}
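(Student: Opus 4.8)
The plan is to reduce the estimate to a one-variable real optimisation, using the rotation invariance of $\mathcal P$ together with the standard parametrisation of the second Carath\'{e}odory coefficient. First, since $p\in\mathcal P$ implies $p(e^{i\theta}z)\in\mathcal P$, and this replaces $(p_1,p_2)$ by $(e^{i\theta}p_1,e^{2i\theta}p_2)$, the quantity $p_2-\beta p_1^2$ is merely multiplied by $e^{2i\theta}$ while $|p_1|$ is unchanged; hence, using $|p_1|\le 2$, I may assume $p_1=c$ with $c\in[0,2]$. Next I would invoke the classical representation: for $p\in\mathcal P$ there exists $x\in\overline{\mathbb D}$ with $2p_2=c^2+(4-c^2)x$ (the first Libera--Zlotkiewicz formula, which follows from the Carath\'{e}odory--Toeplitz positivity of the data $(1,p_1,p_2)$, equivalently from writing $p$ as an average of half-plane maps).

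Substituting gives $p_2-\beta p_1^2=\big(\tfrac12-\beta\big)c^2+\tfrac12(4-c^2)x$, so by the triangle inequality
\[
|p_2-\beta p_1^2|\le \Big|\tfrac12-\beta\Big|c^2+\tfrac12(4-c^2)=\Big(\big|\tfrac12-\beta\big|-\tfrac12\Big)c^2+2=:\Phi(c).
\]
Since $\Phi$ is affine in $c^2\in[0,4]$, its maximum is attained at $c=0$ or $c=2$ according to the sign of $|\tfrac12-\beta|-\tfrac12$. For $\beta\le 0$ this coefficient equals $-\beta\ge 0$, so the maximum is $\Phi(2)=2-4\beta$; for $\beta\ge 1$ it equals $\beta-1\ge 0$, giving $\Phi(2)=4\beta-2$; for $0\le\beta\le 1$ it is $\le 0$, giving $\Phi(0)=2$. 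This yields the three-case bound. For the refinements I would keep the $c^2$ term rather than discard it: if $0<\beta\le\tfrac12$, then $|\tfrac12-\beta|=\tfrac12-\beta$ and the displayed estimate reads $|p_2-\beta p_1^2|\le 2-\beta c^2$, i.e.\ $|p_2-\beta p_1^2|+\beta|p_1|^2\le 2$; if $\tfrac12<\beta\le 1$, then $|\tfrac12-\beta|=\beta-\tfrac12$ and it reads $|p_2-\beta p_1^2|\le 2-(1-\beta)c^2$, i.e.\ $|p_2-\beta p_1^2|+(1-\beta)|p_1|^2\le 2$.

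The equality statements then come from tracing the chain backwards. Equality in the triangle inequality forces $x$ to be unimodular with $\arg x$ aligned to $\arg\big((\tfrac12-\beta)c^2\big)$ (or $x$ any unimodular number when that term vanishes), and the optimisation forces $c=2$ when $\beta<0$ or $\beta>1$ — in which case $4-c^2=0$ and the only $p\in\mathcal P$ with $p_1=2$ is a rotation of $(1+z)/(1-z)$ — and forces $c=0$, $|x|=1$ when $0<\beta<1$, the only such $p$ being a rotation of $(1+z^2)/(1-z^2)$. At the transitional exponents $\beta=0$ and $\beta=1$ the coefficient $|\tfrac12-\beta|-\tfrac12$ vanishes, so $\Phi$ is constant, $c$ is unconstrained, and a one-parameter family of extremals arises: the stated convex combination of the half-plane maps when $\beta=0$, and its reciprocal when $\beta=1$ (using that $q=1/p\in\mathcal P$ satisfies $q_2-q_1^2=-p_2$).

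The only non-elementary ingredient, and hence the main obstacle, is the parametrisation $2p_2=c^2+(4-c^2)x$ with $|x|\le 1$ — equivalently, a clean description of the second-coefficient region of $\mathcal P$. Once that is in hand everything reduces to the short computation above, the most delicate part being the book-keeping of the equality cases at $\beta=0,\tfrac12,1$.
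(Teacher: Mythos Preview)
The paper does not supply its own proof of this lemma: it is quoted verbatim from Ma--Minda \cite{ma-minda} and used as a black box, so there is no argument in the paper to compare yours against. Your derivation via rotation--normalising $p_1=c\in[0,2]$ and the Libera--Z\l{}otkiewicz formula $2p_2=c^2+(4-c^2)x$ (stated in the paper as Lemma~\ref{b2b3b4}) is correct and is in fact the standard route to this inequality; the reduction to the affine function $\Phi(c)=(|{\tfrac12-\beta}|-\tfrac12)c^2+2$ and the two refined inequalities for $0<\beta\le\tfrac12$ and $\tfrac12<\beta\le1$ are handled cleanly.

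One point to tighten is the equality analysis. From your argument you recover the \emph{coefficient} constraints at equality (e.g.\ $c=0$, $|x|=1$ when $0<\beta<1$; $c$ free and $x=\pm1$ at $\beta=0,1$), but identifying the full function $p$ from these first two coefficients needs an extra word: when the second Carath\'eodory--Toeplitz determinant vanishes (equivalently, when $|x|=1$ in the parametrisation), $p$ is forced to be a rational function supported on at most two boundary atoms and is uniquely determined by $(p_1,p_2)$. Invoking that rigidity is what actually pins down the extremals as the stated rotations of $(1+z)/(1-z)$, $(1+z^2)/(1-z^2)$, the convex combinations at $\beta=0$, and their reciprocals at $\beta=1$; your sketch gestures at this but does not name it.
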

Also, we recall that 
\begin{equation}
    \max_{0\leq t\leq 4}(At^2+Bt+C)=
    \begin{cases}
    C,& B\leq0, A\leq \frac{-B}{4};\\
    16A+4B+C, & B\geq0, A\geq \frac{-B}{8}\quad \text{or}\quad B\leq 0, A\geq \frac{-B}{4};\\
    \dfrac{4AC-B^2}{4A},& B>0, A\leq \frac{-B}{8}.\label{l}
    \end{cases}
\end{equation}

\begin{theorem}
Let $f(z)=z+\sum_{n=2}^{n=\infty}a_nz^n \in \mathcal {S}^{*}_{\tau},$ then 
$ |a_2|\leq 1,
		  |a_3|\leq 1/2,
		|a_4|\leq 1/3,$ and
		$|a_5|\leq 323/528\approx 0.611742.$
All these bounds are sharp.
\end{theorem}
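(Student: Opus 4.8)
The plan is to pass from the subordination $zf'(z)/f(z) \prec \tau(z) = 1+\arctan z$ to a Schwarz-function parametrization and then to a Carath\'eodory function, matching coefficients. First I would write $zf'(z)/f(z) = \tau(w(z))$ for a Schwarz function $w$, or equivalently set $w(z) = (p(z)-1)/(p(z)+1)$ with $p \in \mathcal{P}$, $p(z) = 1 + p_1 z + p_2 z^2 + \cdots$. I need the Taylor expansion of $\tau$ around $0$: since $\arctan z = z - z^3/3 + z^5/5 - \cdots$, one has $\tau(w) = 1 + w - w^3/3 + w^5/5 - \cdots$. Substituting $w = w(z) = c_1 z + c_2 z^2 + \cdots$ (with the standard bounds $|c_1|\le 1$, $|c_2|\le 1-|c_1|^2$, etc.) and then expanding $f(z) = z\exp\!\big(\int_0^z (\tau(w(t))-1)/t\, dt\big)$ gives $a_2, a_3, a_4, a_5$ as explicit polynomials in $c_1, c_2, c_3, c_4$ (equivalently in $p_1, p_2, p_3, p_4$).

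The next step is the estimation of each $|a_n|$. For $|a_2|$ and $|a_3|$ the expressions are linear/quadratic in the $p_i$ and follow immediately from $|p_1|\le 2$ and the Ma--Minda functional bound $|p_2 - \beta p_1^2|$ recalled in the excerpt; I expect $a_3$ to require the case $0\le\beta\le 1$ of that lemma, giving the clean bound $1/2$. For $|a_4|$ I would again reduce to a Fekete--Szeg\H{o}-type quantity plus the inequality \eqref{q}, namely $|p_3 - 2p_1 p_2 + p_1^3|\le 2$, after completing the appropriate combination; the target $1/3$ should drop out. The coefficient $a_5$ is the crux: its formula will involve $p_1^4, p_1^2 p_2, p_2^2, p_1 p_3, p_4$ in a combination that should be arranged to invoke inequality \eqref{p}, i.e. $|p_1^4 - 3p_1^2 p_2 + p_2^2 + 2p_1 p_3 - p_4|\le 2$, for the ``worst'' part, while the remaining lower-degree terms are controlled by the refined inequality \eqref{use} and the maximization formula \eqref{l} for $\max_{0\le t\le 4}(At^2+Bt+C)$ with $t = |p_1|^2$.

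The main obstacle is the $a_5$ bound. After using \eqref{p} to peel off one unit-modulus block, what remains is a function of $|p_1|$ and $|p_2|$ (via $|p_2 - \beta p_1^2|$) that must be maximized; one sets $|p_1|^2 = t \in [0,4]$, applies \eqref{use} to bound the $p_2$-dependence, and is left with a quadratic $At^2 + Bt + C$ whose maximum over $[0,4]$ is read off from \eqref{l}. Getting the constants so that this maximum equals exactly $323/528$ requires care: one must check which case of \eqref{l} is active and verify the coefficient of $t^2$ against $-B/8$ or $-B/4$. I would double-check the arithmetic of the $\tau$-expansion (the signs $-1/3$, $+1/5$ matter) and of the $\exp$-integral expansion, since an error there propagates into $a_5$.

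For sharpness, I would exhibit extremal functions. For $a_2, a_3$ the function $\tilde\tau$ of \eqref{1 extre} (or its rotations), corresponding to $w(z) = z$, should be extremal, giving $a_2 = 1$, $a_3 = 1/2$, $a_4 = 1/18$ — wait, that does not match $1/3$, so for $a_4$ and $a_5$ the extremal Schwarz function is not $w(z)=z$ but rather one of the form $w(z) = z(z+\epsilon)/(1+\epsilon z)$ or $w(z) = z^2$-type, i.e. corresponding to the boundary cases in the Ma--Minda and $\eqref{p}$ lemmas (where $p(z) = (1+z^2)/(1-z^2)$ or $p(z)=(1+z)/(1-z)$). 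I would identify, for each of $a_4$ and $a_5$, the precise choice of $p$ that makes every triangle inequality in the estimation chain an equality, write down the resulting $f$ via \eqref{integral}, and confirm numerically that the bound is attained.
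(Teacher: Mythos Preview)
Your overall setup (Schwarz function $w$, then $p\in\mathcal P$ via $w=(p-1)/(p+1)$, compare coefficients) matches the paper, and $a_2,a_3$ follow as you say; indeed $a_3=p_2/4$, so the bare bound $|p_2|\le 2$ already gives $1/2$ with no Fekete--Szeg\H{o} machinery needed. The genuine gap is the $a_4$ bound. The combination you propose, a Fekete--Szeg\H{o} term plus \eqref{q}, does \emph{not} produce the sharp value $1/3$. Concretely, one can rewrite $a_4=\tfrac{1}{6}Q+\tfrac{7}{24}\,p_1\bigl(p_2-\tfrac{13}{21}p_1^2\bigr)$ with $Q=p_3-2p_1p_2+p_1^3$; applying $|Q|\le 2$ and the refined Ma--Minda inequality for $\beta=13/21$, then maximizing over $|p_1|\in[0,2]$, gives only $|a_4|\le \tfrac13+\tfrac{7\sqrt7}{36}\approx 0.85$. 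The paper instead reverts to Schwarz coefficients, writing $3a_4=w_3+\tfrac32 w_1w_2+\tfrac16 w_1^3$, and invokes the Prokhorov--Szynal lemma \cite[Lemma~2]{poko}, which is designed precisely to give sharp bounds for such cubic functionals in $(w_1,w_2,w_3)$; that is the missing tool. Correspondingly the extremal for $a_4$ is $w(z)=z^3$ (i.e.\ $f_4(z)=z\exp\int_0^z t^{-1}\arctan(t^3)\,dt$), not a $z^2$-type Schwarz map.

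For $a_5$ your outline is in the right spirit but underspecified: one application of \eqref{p} still leaves a $p_1p_3$ term and a bare $p_4$, so \eqref{q} is needed here as well. The paper's exact decomposition is $-8a_5=\tfrac14 P-\tfrac16 p_1 Q+\tfrac{7}{12}p_1^2\bigl(p_2-\tfrac{11}{42}p_1^2\bigr)-\tfrac34 p_4$; then \eqref{p}, \eqref{q}, \eqref{use} (with $\beta=11/42<1/2$) and $|p_4|\le 2$ reduce everything to a quartic in $|p_1|$, handled by \eqref{l} in its third case ($A=-11/72$, $B=7/6$, $C=0$, maximum $49/22$), whence $323/528$. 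Finally, the $a_5$ extremal is not of the form $w(z)=z^k$: the paper exhibits the specific complex choice $p_1=p_4=2$, $p_3=-2$, $p_2=\tfrac{1}{33}(-44-i\sqrt{22})$, so a search among rotations of $(1+z^k)/(1-z^k)$ would not locate it.
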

\begin{proof} 
Let $f\in \mathcal {S}^{*}_{\tau},$ then there exists a Schwarz function $w(z)=\sum_{k=1}^{\infty}w_{k}z^k$ such that 
\begin{equation}
	\dfrac{zf\;'(z)}{f(z)}= 1+\arctan (w(z)).\label{schwarz}
	\end{equation}
Assume that $w(z)=(p(z)-1)/(p(z)+1)$ with $p(z)=1+p_{1}z+p_{2}z^2+\cdots \in \mathcal {P}.$ After substituting the values of $w(z)$, $p(z)$ and $f(z)$ in \eqref{schwarz} and comparing the corresponding coefficients, we get 
\begin{equation}
a_2=\frac{p_1}{2}, \quad a_3=\frac{p_2}{4}, \quad a_4=\frac{-1}{12}\bigg(\frac{{p_{1}}^3}{6}+\frac{p_{1}p_2}{2}-2p_3\bigg)\label{1 a2}
\end{equation}
\begin{equation}
\text{and} \quad
		a_5=\frac{-1}{8}\bigg(\frac{-5p_{1}^4}{72}+\frac{p_{2}^2}{4}+\frac{p_{1}p_{3}}{3}+\frac{p_{1}^2p_2}{6}-p_4\bigg). \label{a5}
	\end{equation}
 By using the previously established coefficient estimate for the Carath\'{eodory} class of functions, where $|p_n|\leq 2$ for $(n\geq 1)$, we are able to derive the bounds for $a_2$ and $a_3$. Specifically, we obtain $|a_2|\leq 1$ and $|a_3|\leq 1/2$.\\
For $a_4,$ equation (\ref{schwarz}) is re-written as:
\begin{equation}
		zf\;'(z)=[1+\arctan (w(z))]f(z). \label{re}
\end{equation}
On substituting $f(z)=z+\sum_{n=2}^{n=\infty}a_nz^n$ and $w(z)=\sum_{k=1}^{\infty}w_{k}z^k$ in equation (\ref{re}) and comparing the coefficients of $z^4,$ we get $$3a_4=w_3+\frac{3}{2}w_1w_2+\frac{1}{6}{w_1}^3.$$
By applying \cite[Lemma 2]{poko}, we establish that $|a_4|\leq 1/3$.\\
Now, using \cite[Theorem 4.15]{spkumar}, we have 
\begin{align*}
	a_5&=\frac{-1}{8}\bigg(\frac{-5}{72}{p_{1}}^4+\frac{1}{4}{p_{2}}^2+\frac{1}{3}p_{1}p_{3}+\frac{1}{6}{p_{1}^2}p_2-p_4\bigg)\\
	&=\dfrac{-1}{8}\bigg(\dfrac{1}{4}P-\dfrac{1}{6}p_1Q+\dfrac{7}{12}p_1^2R-\dfrac{3}{4}p_4\bigg)\\
	&\leq \dfrac{1}{8}\bigg(\dfrac{1}{4}|P|+\dfrac{1}{3}|Q|+\dfrac{7}{12}|p_1|^2|R|+\dfrac{3}{2}\bigg),
\end{align*}
with $P=p_1^4+p_2^2-3p_1^2p_2+2p_1p_3-p_4$, $Q=p_1^3-2p_1p_2+p_3$ and $R=p_2-(11/42)p_1^2$. Now from equations (\ref{p}), (\ref{q}) and (\ref{use}), we have $|P|\leq 2$, $|Q|\leq 2$ and $|R|\leq 2$ respectively. Upon substituting these values in the expression of $a_5$ above, we get
\begin{equation*}
		    |a_5|\leq \dfrac{1}{8}\bigg(\dfrac{8}{3}+\dfrac{7}{6}|p_1|^2-\dfrac{11}{72}|p_1|^4\bigg).
\end{equation*}
Now, we obtain $|7|p_1|^2/6-11|p_1|^4/72|\leq 49/22$ using the equation (\ref{l}) by taking $A=-11/72$, $B=7/6$ and $C=0,$ which leads to the desired estimate for $a_5.$  The sharpness of the result can be witnessed when $p_1=p_4=2$, $p_2=(1/33)(-44-i\sqrt{22})$ and $p_3=-2$.\\
The function 
	\begin{equation*}
	 f_n(z)=z\exp\bigg(\int_{0}^{z}\frac{\arctan (t^{n-1})}{t}dt\bigg)  
	\end{equation*}
acts as the extremal function  for the initial coefficients $a_n$ for $n=2,3$ and $4$.
\end{proof}	

	
\subsection{Sharp Hankel Determinants}
In 1966, Pommerenke \cite{pomi} introduced the concept of the Hankel determinants for the class $\mathcal{S}$ and later it was studied by others as well. The expression of $q^{th}$ Hankel determinant for a function $f\in \mathcal{A}$, whose coefficients are given in (\ref{form}), is as follows:
\begin{equation*}
	H_{q}(n) =\begin{vmatrix}
a_n&a_{n+1}& \ldots &a_{n+q-1}\\
a_{n+1}&a_{n+2}&\ldots &a_{n+q}\\
\vdots& \vdots &\ddots &\vdots\\
a_{n+q-1}&a_{n+q}&\ldots &a_{n+2q-2}
\end{vmatrix}\quad \text{for} \quad q,n\in \mathbb{N}.
\end{equation*}
For some special choices of $n$ and $q$; $a_2a_3-a_4$ is a type of famous Fekete-Szeg\"{o} functional and $H_2(2)=a_2a_4-a_3^2$ is second order Hankel determinant. While, assuming $a_1:=1$, the expression of third order Hankel determinant is given by
\begin{equation}
	H_{3}(1) =\begin{vmatrix}
1&a_{2} &a_{3}\\
a_{2}&a_{3}&a_{4}\\
a_{3}&a_{4} &a_{5}
\end{vmatrix}
=a_3(a_2a_4-a_3^2)-a_4(a_4-a_2a_3)+a_5(a_3-a_2^2).\label{1h3}
\end{equation}		
The following lemma serves as a basis for establishing our main result as it contains the well-known formula for $p_2$ and $p_3$ and $p_4$.
\begin{lemma}\cite{rj,lemma1}\label{b2b3b4}
Let $p\in \mathcal {P}$ of the form $1+\sum_{n=1}^{\infty}p_n z^n.$ Then
	\begin{equation*}
		2 p_2=p_1^2+\gamma (4-p_1^2),
	\end{equation*}
\begin{equation*}
	4p_3=p_1^3+2p_1(4-p_1^2)\gamma -p_1(4-p_1^2) {\gamma}^2+2(4-p_1^2)(1-|\gamma|^2)\eta
\end{equation*}
and \begin{equation*}
	8p_4=p_1^4+(4-p_1^2)\gamma (p_1^2({\gamma}^2-3\gamma+3)+4\gamma)-4(4-p_1^2)(1-|\gamma|^2)(p_1(\gamma-1)\eta+\bar{\gamma}{\eta}^2-(1-|\eta|^2)\rho),
\end{equation*}
for some $\gamma$, $\eta$ and $\rho$ such that $|\gamma|\leq 1$,  $|\eta|\leq 1$ and $|\rho|\leq 1.$
\end{lemma}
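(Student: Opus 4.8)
The plan is to recover these three identities from the classical Schur (Carath\'eodory--Fej\'er) parametrisation of Schwarz functions, in the spirit of the work of Libera and Zlotkiewicz. Since replacing $p(z)$ by $p(e^{-i\theta}z)$ merely rotates the coefficients, $p_n\mapsto e^{-in\theta}p_n$, and leaves the stated identities unaffected, I would first normalise so that $p_1\in[0,2]$; then $p_1^2=|p_1|^2$, which is precisely what makes the right-hand sides carry the factor $4-p_1^2$ rather than $4-|p_1|^2$. Next associate to $p\in\mathcal P$ its Schwarz function $\omega(z):=(p(z)-1)/(p(z)+1)$, analytic on $\mathbb D$ with $\omega(0)=0$ and $|\omega(z)|\le|z|$. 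Writing $\omega(z)=c_1z+c_2z^2+c_3z^3+c_4z^4+\cdots$ and expanding $p=(1+\omega)/(1-\omega)=1+2\omega+2\omega^2+2\omega^3+\cdots$, comparison of coefficients yields
\begin{align*}
p_1&=2c_1,\qquad p_2=2c_2+2c_1^2,\qquad p_3=2c_3+4c_1c_2+2c_1^3,\\
p_4&=2c_4+4c_1c_3+2c_2^2+6c_1^2c_2+2c_1^4.
\end{align*}
Thus the problem reduces to parametrising $c_2,c_3,c_4$ in terms of $c_1=p_1/2$ and of quantities in the closed unit disk.

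The heart of the argument is Schur's algorithm. The quotient $v(z):=\omega(z)/z$ is an analytic self-map of $\overline{\mathbb D}$ with $v(0)=c_1$; composing with the disk automorphism $w\mapsto(w-c_1)/(1-\bar c_1w)$ and dividing once more by $z$ produces a new self-map $\omega_1$ of $\overline{\mathbb D}$, and reading off its value at $0$ gives $c_2=\gamma(1-|c_1|^2)$ with $\gamma:=\omega_1(0)$, $|\gamma|\le1$. (The underlying fact, used repeatedly, is that an analytic self-map $h$ of $\overline{\mathbb D}$ satisfies $h'(0)=\eta(1-|h(0)|^2)$ for some $|\eta|\le1$, itself an application of the Schwarz lemma to $(h-h(0))/(1-\overline{h(0)}h)$.) Applying this device to $\omega_1$, and then once more, introduces $\eta,\rho\in\overline{\mathbb D}$ and yields
\[
c_3=(1-|c_1|^2)\bigl[(1-|\gamma|^2)\eta-\bar c_1\gamma^2\bigr]
\]
together with a longer closed form for $c_4$ in $c_1,\gamma,\eta,\rho$. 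Substituting $c_1=p_1/2$ and these expressions into the relations above for $p_2$ and $p_3$ reproduces, after routine simplification, the first two displayed identities of the lemma; the formula for $8p_4$ follows in the same manner from the expression for $c_4$.

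I expect the only real obstacle to be the bookkeeping in the $c_4$ step: it requires three successive automorphism-and-shift operations, and one must carefully track the accumulating powers of $1-|c_1|^2$, $1-|\gamma|^2$, $1-|\eta|^2$, as well as which factors appear conjugated, before collecting terms into the stated shape. Everything else --- the normalisation $p_1\in[0,2]$, the passage to $\omega$, the coefficient comparison, and the single ``second-coefficient'' estimate for Schwarz functions --- is mechanical. Alternatively, the identities may simply be quoted from \cite{rj,lemma1}.
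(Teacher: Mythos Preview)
The paper does not prove this lemma at all: it is stated with citations to \cite{rj,lemma1} and then used as a black box in the Hankel-determinant computations. So there is no ``paper's own proof'' to compare against beyond the act of quoting the result.

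Your sketch via the Schur (Carath\'eodory--Fej\'er) algorithm is the standard derivation and is exactly how the cited references obtain these identities: pass to the Schwarz function $\omega=(p-1)/(p+1)$, peel off one layer at a time with the automorphism-and-shift $w\mapsto (w-w(0))/(1-\overline{w(0)}\,w)$ followed by division by $z$, collect the Schur parameters $\gamma,\eta,\rho\in\overline{\mathbb D}$, and then push back through $p=(1+\omega)/(1-\omega)$. Your coefficient relations for $p_1,\dots,p_4$ in terms of $c_1,\dots,c_4$ are correct, and your remark that the only genuine labour lies in the $c_4$ bookkeeping is accurate. One small caveat: the rotation normalisation $p_1\in[0,2]$ is not part of the lemma as stated---the identities hold for general $p_1$ with $4-p_1^2$ understood as written (so that for non-real $p_1$ the factor is not $4-|p_1|^2$); the normalisation is only invoked later when the lemma is \emph{applied}. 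Otherwise your proposal is sound and, as you note, one may equally well just cite the references.
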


\begin{theorem}
Let $f\in \mathcal {S}^{*}_{\tau}.$ Then
\begin{equation*}
        |a_2a_3-a_4|\leq \dfrac{1}{3}.
\end{equation*}
The result is sharp.
\end{theorem}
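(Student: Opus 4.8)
The plan is to compute the Fekete–Szegő-type functional $a_2a_3 - a_4$ in terms of the Carathéodory coefficients $p_1, p_2, p_3$ using the representation already set up in the preceding proof, and then bound the resulting expression. From \eqref{1 a2} we have $a_2 = p_1/2$, $a_3 = p_2/4$, and $a_4 = -\tfrac{1}{12}\big(\tfrac{p_1^3}{6} + \tfrac{p_1 p_2}{2} - 2p_3\big)$, so
\begin{equation*}
a_2 a_3 - a_4 = \frac{p_1 p_2}{8} + \frac{1}{12}\left(\frac{p_1^3}{6} + \frac{p_1 p_2}{2} - 2p_3\right) = \frac{p_1^3}{72} + \frac{p_1 p_2}{6} - \frac{p_3}{6}.
\end{equation*}
First I would rewrite this so as to exploit the known estimate \eqref{q}, namely $|p_3 - 2p_1 p_2 + p_1^3| \le 2$. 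Writing $a_2 a_3 - a_4 = -\tfrac{1}{6}\big(p_3 - 2p_1 p_2 + p_1^3\big) + \big(\tfrac{p_1^3}{72} - \tfrac{p_1^3}{6} + \tfrac{p_1 p_2}{6} - \tfrac{p_1 p_2}{3}\big)$; I would group the leftover terms as a multiple of $p_1$ times a Fekete–Szegő expression $p_2 - \beta p_1^2$ and apply the Ma–Minda estimate (or its refined version \eqref{use}) to control it together with the $|p_3 - 2p_1p_2 + p_1^3|$ term.

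The cleaner route, which I expect to work, is to split $a_2a_3 - a_4 = -\tfrac16\big(p_3 - 2p_1p_2 + p_1^3\big) - \tfrac16 p_1\big(p_2 - \tfrac{11}{12}p_1^2 \cdot \tfrac{1}{?}\big)$ — more precisely one isolates the coefficient so that the residual is $-\tfrac{1}{6}p_1(p_2 - c\,p_1^2)$ for the appropriate constant $c$ coming from $\tfrac{1}{72} - \tfrac16 = -\tfrac{11}{72}$ against $-\tfrac16$, giving $c = 11/12$. Then $|a_2a_3-a_4| \le \tfrac16|p_3 - 2p_1p_2+p_1^3| + \tfrac16|p_1|\,|p_2 - \tfrac{11}{12}p_1^2|$. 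Using $|p_3 - 2p_1p_2+p_1^3|\le 2$ and $|p_1|\le 2$ together with the sharpened inequality \eqref{use}-type bound $|p_2 - \beta p_1^2| + (1-\beta)|p_1|^2 \le 2$ with $\beta = 11/12 \in (1/2,1]$, one obtains a quantity that should simplify to exactly $1/3$ after maximizing over $|p_1| \in [0,2]$.

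The main obstacle will be choosing the split so that the two triangle-inequality pieces combine to the sharp value $1/3$ rather than something larger: a naive triangle inequality on the three monomials $p_1^3/72$, $p_1 p_2/6$, $p_3/6$ gives only $2/72 + 4/6 + 2/6 = 1 + 1/36$, far too weak, so the grouping into the two "stable" functionals \eqref{q} and the refined Fekete–Szegő bound is essential. I would verify the arithmetic of the optimization carefully, and confirm sharpness by exhibiting a function — the natural candidate being $\tilde\tau$ from \eqref{1 extre} (or a suitable rotation/variant $f_n$ with $n$ chosen so that only $a_4$ is active), for which $a_2 = a_3 = 0$ forces $|a_2a_3 - a_4| = |a_4| = 1/3$, matching the bound established earlier for $|a_4|$.
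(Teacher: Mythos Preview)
There is a fatal obstruction: the inequality $|a_2 a_3 - a_4|\le 1/3$ is \emph{false} as stated, so no approach can succeed. Your formula
\[
a_2a_3-a_4=\frac{p_1^3}{72}+\frac{p_1p_2}{6}-\frac{p_3}{6}
\]
is correct, but evaluate it at $p_1=p_2=p_3=2$ (that is, $p(z)=(1+z)/(1-z)$, equivalently $w(z)=z$, giving precisely $f=\tilde\tau$ of \eqref{1 extre}): one obtains $a_2a_3-a_4=\tfrac{8}{72}+\tfrac{4}{6}-\tfrac{2}{6}=\tfrac49>\tfrac13$. Directly from the expansion $\tilde\tau(z)=z+z^2+\tfrac12 z^3+\tfrac{1}{18}z^4+\cdots$ one has $a_2a_3-a_4=1\cdot\tfrac12-\tfrac{1}{18}=\tfrac49$. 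The paper supplies no proof of the bound --- it only exhibits the function \eqref{extremal} with $a_2=a_3=0$, $a_4=1/3$ --- and that function is not extremal for this functional.

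Your decomposition also contains an arithmetic slip that masks the problem: subtracting $-\tfrac16(p_3-2p_1p_2+p_1^3)$ from $a_2a_3-a_4$ leaves the residual $\tfrac{13}{72}p_1^3-\tfrac16 p_1p_2=-\tfrac16 p_1\big(p_2-\tfrac{13}{12}p_1^2\big)$, so the correct parameter is $\beta=13/12$, not $11/12$. Since $\beta>1$, the refined inequality \eqref{use} is unavailable, and the Ma--Minda bound $|p_2-\beta p_1^2|\le 4\beta-2$ together with \eqref{q} yields a total well above $1/3$. This is not a repairable technicality but a symptom of the target bound being too small. In Schwarz-coefficient form one has $a_2a_3-a_4=-\tfrac13\big(w_3-\tfrac43 w_1^3\big)$, and applying the Prokhorov--Szynal lemma \cite{poko} at $(\mu,\nu)=(0,-4/3)$ indicates the sharp constant should be $4/9$, attained by $w(z)=z$.
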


 Let the function $f:\mathbb{D}\rightarrow \mathbb{C}$ be defined as
\begin{equation}
f(z)=z\exp\bigg(\int_{0}^{z}\dfrac{\arctan(t^3)}{t}dt\bigg)=z+\dfrac{z^4}{3}+\dfrac{z^7}{18}+\cdots,\label{extremal}
\end{equation}
with $f(0)=0$ and $f'(0)=1$. The function defined in equation (\ref{extremal}) acts as an extremal function for the bounds of $|a_2a_3-a_4|$ for the values of $a_2=a_3=0$ and $a_4=1/3.$

We now establish the bound for $H_2(2)$ in the following result by skipping the proof.
\begin{theorem}
Let $f\in \mathcal {S}^{*}_{\tau}.$ Then
\begin{equation*}
        |H_{2}(2)|\leq \dfrac{1}{4}.
\end{equation*}
The result is sharp.
\end{theorem}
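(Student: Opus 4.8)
The plan is to start from the coefficient formulae in \eqref{1 a2}, namely $a_2 = p_1/2$, $a_3 = p_2/4$, and $a_4 = -\tfrac{1}{12}\bigl(\tfrac{p_1^3}{6} + \tfrac{p_1 p_2}{2} - 2p_3\bigr)$, and substitute them into $H_2(2) = a_2 a_4 - a_3^2$ to obtain a polynomial expression in $p_1, p_2, p_3$. Since the class $\mathcal{S}^*_\tau$ is rotationally invariant (if $f \in \mathcal{S}^*_\tau$ then so is $e^{-i\theta}f(e^{i\theta}z)$), I may assume without loss of generality that $p_1 \in [0,2]$; write $p_1 = p$. Then I would invoke Lemma~\ref{b2b3b4} to replace $p_2$ and $p_3$ by their parametrizations $2p_2 = p^2 + \gamma(4-p^2)$ and $4p_3 = p^3 + 2p(4-p^2)\gamma - p(4-p^2)\gamma^2 + 2(4-p^2)(1-|\gamma|^2)\eta$, with $|\gamma| \le 1$, $|\eta| \le 1$.

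Next I would collect $H_2(2)$ as an expression in $p \in [0,2]$, $\gamma$, and $\eta$, and apply the triangle inequality in the $\eta$-variable first, since $\eta$ appears linearly: this bounds $|H_2(2)|$ by a quantity of the form $\Phi(p, |\gamma|) + \Psi(p)\,(4-p^2)(1-|\gamma|^2)$, where $\Phi$ collects the $\eta$-free terms (after a further triangle-inequality estimate on the $\gamma$-dependence, writing $x = |\gamma| \in [0,1]$). The problem then reduces to maximizing a function of the two real variables $(p, x) \in [0,2]\times[0,1]$. I expect the maximum to occur at $p = 0$, which forces $x$-terms of the form $4(1-x^2)$ plus lower-order pieces; at $p=0$ the bound should collapse to exactly $1/4$, matching the claimed constant. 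The boundary cases $p = 2$ (where $4 - p^2 = 0$, killing the $\gamma,\eta$ freedom and leaving a clean polynomial in $p$ alone) and $x = 1$ should be checked to confirm they give strictly smaller values.

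The main obstacle will be the two-variable optimization of $\Phi(p,x) + \Psi(p)(4-p^2)(1-x^2)$: one must be careful that the repeated use of the triangle inequality does not inflate the bound past $1/4$, so the estimates on the $\gamma$-terms have to be organized to keep the coefficients tight (for instance, grouping $\gamma$ and $\gamma^2$ terms before bounding, rather than bounding term-by-term). It is likely that for the range of $p$ away from $0$ one needs the refined inequality \eqref{use} of Ma--Minda on $|p_2 - \beta p_1^2|$ rather than the crude $|p_2| \le 2$, in order to avoid overshooting. Finally, sharpness follows by exhibiting a function with $p_1 = 0$ and $p_2, p_3$ chosen at the extreme point of the parametrization — concretely $p(z) = (1+z^2)/(1-z^2)$, which gives $a_2 = a_4 = 0$ and $a_3 = 1/2$, so that $|H_2(2)| = |a_2 a_4 - a_3^2| = 1/4$; equivalently one can use the function $z\exp\!\int_0^z \arctan(t^2)/t\,dt$, whose relevant coefficients realize the bound.
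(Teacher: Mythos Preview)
The paper explicitly skips the proof of this theorem (``by skipping the proof''), recording only the statement and the extremal function $f(z)=z\exp\int_0^z \arctan(t^2)/t\,dt$. So there is no argument in the paper to compare against; your plan is the standard route for $H_2(2)$ bounds in Ma--Minda classes and it works.

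One remark that will save you effort: your concerns about the triangle inequality overshooting and about needing the Ma--Minda refinement \eqref{use} turn out to be unnecessary here. Substituting \eqref{1 a2} gives $144\,H_2(2)=-p_1^4-3p_1^2p_2+12p_1p_3-9p_2^2$, and after inserting the parametrizations of Lemma~\ref{b2b3b4} (with $p:=p_1\in[0,2]$, $t:=4-p^2$) the linear-in-$\gamma$ term cancels identically, leaving
\[
144\,H_2(2)=-\tfrac{7}{4}p^{4}-\gamma^{2}t\bigl(9+\tfrac{3}{4}p^{2}\bigr)+6pt(1-|\gamma|^{2})\eta .
\]
With $x=|\gamma|$ and $|\eta|\le 1$, the crude triangle inequality already gives
\[
144\,|H_2(2)|\le F(p,x):=\tfrac{7}{4}p^{4}+x^{2}t\bigl(9+\tfrac{3}{4}p^{2}\bigr)+6pt(1-x^{2}).
\]
Since $\partial F/\partial x=\tfrac{3}{2}xt(p-2)(p-6)\ge 0$ on $[0,2]\times[0,1]$, the maximum in $x$ is at $x=1$, where $F(p,1)=(p^{2}-3)^{2}+27\le 36$ with equality only at $p=0$. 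Hence $|H_2(2)|\le 36/144=1/4$, and your extremal choice $p(z)=(1+z^{2})/(1-z^{2})$ (equivalently $w(z)=z^{2}$), which produces exactly the paper's extremal function, realizes the bound.
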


For the sharpness of the above result, we consider the function $f:\mathbb{D}\rightarrow \mathbb{C}$ be defined as
\begin{equation*}
f(z)=z\exp\bigg(\int_{0}^{z}\dfrac{\arctan(t^2)}{t}dt\bigg)=z+\dfrac{z^3}{2}+\dfrac{z^5}{8}+\cdots,
\end{equation*}
with $f(0)=0$ and $f'(0)=1$. 

Obtaining the upper bound for the third order Hankel determinant is a challenging task, especially if we aim to find a sharp result as noted in \cite{kumar-ganganiaCardioid-2021,sharp}. Thus, finding the sharp bound for the third order Hankel determinant in \cite{kumar-ganganiaCardioid-2021} was an open problem and has recently been solved in \cite{nehacardioid}. Motivated by the works in \cite{sharp,nehacardioid,nehaalpha}, we now establish the sharp upper bound for the third order Hankel determinant for functions belonging to the class $\mathcal{S}^{*}_{\tau}$.


\begin{theorem}
Let $f\in \mathcal {S}^{*}_{\tau}.$ Then 
\begin{equation}
	|H_3(1)|\leq \dfrac{1}{9}.\label{9.5}
\end{equation}
The result is sharp.
\end{theorem}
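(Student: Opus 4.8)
The plan is to express $H_3(1)$ as in \eqref{1h3} entirely in terms of the Carathéodory coefficients $p_1,p_2,p_3,p_4$ using the coefficient formulae \eqref{1 a2} and \eqref{a5}, then reduce the number of free parameters via Lemma~\ref{b2b3b4}. By the rotation invariance of the class, I may assume $p_1 = p \in [0,2]$ is real and nonnegative. Substituting the expressions for $2p_2$, $4p_3$, $8p_4$ from Lemma~\ref{b2b3b4} (with parameters $\gamma,\eta,\rho$, $|\gamma|,|\eta|,|\rho|\le 1$) into the polynomial $a_3(a_2a_4-a_3^2)-a_4(a_4-a_2a_3)+a_5(a_3-a_2^2)$ turns $|H_3(1)|$ into a function of $p\in[0,2]$ and of $\gamma,\eta,\rho$. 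The standard strategy is then: first take the modulus and use $|\rho|\le 1$ to bound the $\rho$-dependent terms linearly, so that after the triangle inequality $|H_3(1)|$ is dominated by an expression of the form $\Phi(p,|\gamma|,|\eta|)$ that is increasing in $|\eta|$ or quadratic in $|\eta|$ with controllable coefficients; maximize over $|\eta|\in[0,1]$; then one is left with a two-variable problem in $(p,x)$ where $x=|\gamma|\in[0,1]$.

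The key computational checkpoints, in order, are: (i) record the clean formula $|H_3(1)| = |\,\tfrac{1}{9}(\cdots)\,|$ after full substitution, keeping careful track of the numerical fractions coming from $a_4$ (denominator $12$) and $a_5$ (denominator $8$); (ii) split into the regime $p$ near $2$ versus $p$ in a bounded-away-from-$2$ range, since the factor $(4-p^2)$ multiplies all $\gamma,\eta,\rho$ contributions and thus for $p$ close to $2$ the determinant is dominated by the pure-$p$ term, which should be checked to be at most $1/9$ directly; (iii) for the complementary range, after eliminating $\rho$ and $\eta$, bound the resulting $\Phi(p,x)$ on $[0,2]\times[0,1]$ by elementary calculus — typically showing $\partial_x \Phi \ge 0$ or locating the finitely many critical points and evaluating. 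The sharpness claim should be verified by exhibiting the extremal $p(z)$: since equality is expected at $p_1=p_2=p_3=0$ (so $a_2=a_3=a_4=0$) with $p_4$ extremal, the function $f(z)=z\exp\int_0^z \arctan(t^3)/t\,dt$ — already used above as the extremizer for $|a_2a_3-a_4|$ — is the natural candidate, giving $H_3(1) = a_5(a_3-a_2^2) = a_5\cdot 0$... so more carefully one checks which coefficient configuration yields $|a_3|\,|a_2a_4-a_3^2|$ or the $a_4^2$ term equal to $1/9$; I would identify this configuration from the equality cases of the bounds used and confirm numerically.

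The main obstacle will be step (iii): controlling the two-variable function $\Phi(p,x)$ over the full square, because the transcendental nature of $\tau$ does not enter here but the polynomial in $p$ and $x$ is of fairly high degree and the triangle-inequality bound may not be tight enough uniformly. The delicate point is choosing \emph{how} to bound the mixed $\gamma$--$\eta$ terms: bounding too crudely (e.g. $|\eta|\le 1$ everywhere) may overshoot $1/9$ near some interior point, so one likely needs to retain the $(1-|\eta|^2)$ factor and treat $\Phi$ as a genuine quadratic in $|\eta|$, whose maximizing $|\eta|^\ast(p,x)$ is then back-substituted — this is the step where the sharp constant $1/9$ is actually forced. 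A secondary difficulty is ensuring the case division in (ii) is exhaustive and that the boundary curves between cases are handled, which is routine but error-prone. Once $\Phi$ is shown to be $\le 1/9$ on $[0,2]\times[0,1]$ with the maximum attained at the predicted point, the theorem follows.
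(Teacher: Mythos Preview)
Your overall strategy---express $H_3(1)$ in Carath\'eodory coefficients, use rotation invariance to set $p_1=p\in[0,2]$, parametrize $p_2,p_3,p_4$ via Lemma~\ref{b2b3b4}, eliminate $\rho$ by the triangle inequality, and then optimize the resulting real upper bound---is exactly the paper's approach. The only organizational difference is that the paper keeps all three real variables $(p,x,y)=(p,|\gamma|,|\eta|)$ and maximizes the bounding function $H(p,x,y)$ over the full cuboid $[0,2]\times[0,1]\times[0,1]$ (checking interior, then the six faces, then the twelve edges), rather than first optimizing out $y=|\eta|$ to reduce to a two-variable $\Phi(p,x)$; in practice these amount to the same computation, and your anticipated difficulty in step~(iii) is precisely the case-by-case work the paper carries out.

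Your sharpness discussion contains a slip worth fixing now rather than later. For the extremal $f(z)=z\exp\int_0^z \arctan(t^3)/t\,dt$ one has Schwarz function $w(z)=z^3$, hence $p(z)=(1+z^3)/(1-z^3)$, so $p_1=p_2=p_4=0$ but $p_3=2$. This gives $a_2=a_3=a_5=0$ and $a_4=1/3$ (not $a_4=0$ as you wrote). The only surviving term in \eqref{1h3} is $-a_4(a_4-a_2a_3)=-a_4^2=-1/9$, so $|H_3(1)|=1/9$. Once you correct which $p_k$ is nonzero, your candidate extremal is exactly the one the paper uses.
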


\begin{proof} 
Since the class $\mathcal {P}$ is invariant under rotation, the value of $p_1=:p$ lies in the interval [0,2]. Then, substituting the values of $a_i's$ $(i=2,3,4,5)$ from equations (\ref{1 a2}) and (\ref{a5}) in the equation (\ref{1h3}), we get,
$$H_3(1)=\dfrac{1}{20736}\bigg(-49p^6+57p^4p_2-198p^2{p_2}^2-486{p_2}^3+312p^3p_3+936p p_2 p_3-576p_3^2-648p^2p_4+648p_2p_4\bigg).$$
On simplification using Lemma \ref{b2b3b4}, we obtain
$$H_3(1)=\dfrac{1}{82944}\bigg(\mu_1(p,\gamma)+\mu_2(p,\gamma)\eta+\mu_3(p,\gamma){\eta}^2+\psi(p,\gamma,\eta)\rho\bigg),$$
where $\gamma,\eta,\rho\in \mathbb {D}$ and
\begin{align*}	
\mu_1(p,\gamma):&=-49p^6-81{\gamma}^2p^2(4-p^2)^2-648{\gamma}^2p^2(4-p^2)-135{\gamma}^3p^2(4-p^2)^2+18{\gamma}^4p^2(4-p^2)^2\\
&\quad+117{\gamma}p^4(4-p^2)+324{\gamma}^3(4-p^2)^2-6p^4{\gamma}^2(4-p^2)-162p^4{\gamma}^3(4-p^2),\\
\mu_2(p,\gamma):&=(1-|\gamma|^2)(4-p^2)\bigg(336p^3+648{\gamma}p^3+(4-p^2)(432{\gamma}p-72p\gamma^2)\bigg),\\
\mu_3(p,\gamma):&=72(1-|\gamma|^2)(4-p^2)\bigg(-(8+|\gamma|^2)(4-p^2)+9p^2\bar{\gamma}\bigg),\\
\psi(p,\gamma,\eta):&=648(1-|\gamma|^2)(4-p^2)(1-|\eta|^2)\bigg(-p^2+\gamma(4-p^2)\bigg).
\end{align*}
Further, by taking $x=|\gamma|$, $y=|\eta|$ and using the fact that $|\rho|\leq 1,$ we have
\begin{align*}
	|H_3(1)|\leq \dfrac{1}{82944}\bigg(|\mu_1(p,\gamma)|+|\mu_2(p,\gamma)|y+|\mu_3(p,\gamma)|y^2+|\psi(p,\gamma,\eta)|\bigg)\leq H(p,x,y),
\end{align*}
where 
\begin{equation}
		H(p,x,y)=\dfrac{1}{82944}\bigg(h_1(p,x)+h_2(p,x)y+h_3(p,x)y^2+h_4(p,x)(1-y^2)\bigg)\label{new}
\end{equation} 
with 
\begin{align*}
	h_1(p,x):&=49p^6+81x^2p^2(4-p^2)^2+135x^3p^2(4-p^2)^2+18x^4p^2(4-p^2)^2+324x^3(4-p^2)^2\\
	&\quad +6x^2p^4(4-p^2)+117xp^4(4-p^2)+648x^2p^2(4-p^2)+162x^3p^4(4-p^2),\\
h_2(p,x):&=(1-x^2)(4-p^2)\bigg(336p^3+648p^3x+(4-p^2)(432px+72px^2)\bigg),\\
h_3(p,x):&=72(1-x^2)(4-p^2)\bigg((8+x^2)(4-p^2)+9p^2x\bigg),\\
h_4(p,x):&=648(1-x^2)(4-p^2)\bigg(p^2+x(4-p^2)\bigg).
	\end{align*}

In the closed cuboid $S:[0,2]\times [0,1]\times [0,1]$, we must now maximise $H(p,x,y)$. By locating the maximum values inside the six faces, on the twelve edges, and inside $S$, we can prove this.
\begin{enumerate}
\item Initially, we consider all the interior points of $S$. Let $(p,x,y)\in (0,2)\times (0,1)\times (0,1).$ In order to find the points of maxima in the interior of $S$, we partially differentiate equation (\ref{new}) with respect to $y$. We get
\begin{align*}
	    \dfrac{\partial H}{\partial y}&=\dfrac{(4 - p^2) (1 - x^2)}{3456} \bigg(12 p x (6 + x) + p^3 (14 + 9 x - 3 x^2) - 6 p^2 (17 - 18 x + x^2) y\\
	    & \quad \quad \quad \quad\quad\quad\quad\quad\quad+24 (8 - 9 x + x^2) y\bigg). 
\end{align*}
Now $\partial H/\partial y=0$ gives
\begin{equation*}
	    y=y_0:=\dfrac{12 p x (6 + x) + p^3 (14 + 9 x - 3 x^2)}{6 (1 - x)^2 (1 + x) (4 - p^2) (p^2 (17 - x) - 4 (8- x))}.
\end{equation*}
For the existence of critical points, $y_0$ should belong to $(0,1),$ which is possible only when 
\begin{align}
12 p x (6 + x) + p^3 (14 + 9 x - 3 x^2) &< 
 6 (4 - p^2)(1 - x)^2 (1 + x) \bigg(p^2 (17 - x)\nonumber\\
 &\quad- 4 (8 - x)\bigg) .\label{h1}
\end{align}

Now, we find the solutions satisfying the inequality (\ref{h1}) 
for the existence of critical points using hit and trial method. If we assume $p$ tends to 0 and 2, then there does not exist any $x\in (0,1)$ satisfying the equation (\ref{h1}). Similarly, if we assume $x$ tends to 0, then equation (\ref{h1}) holds for all $p>1.45018$. 
On calculations, we observe that there does not exist any $x\in (0,1)$ when $p\in (0,1.45018).$ If $x$ tends to 1, then no such $p\in (0,2)$ exists. Thus, we conclude that the function $G$ has no critical point in $(0,2)\times (0,1)\times (0,1).$ 

\item Now, we consider the interior of all the six faces of the cuboid $S$.\\
\noindent \underline{On $p=0$} 
\begin{equation}
g_1(x,y):=\dfrac{1}{144}\bigg(9x^3+2(1-x^2)(8+x^2)y^2+18x(1-x^2)(1-y^2)\bigg),\label{9.4}
\end{equation}
with $x,y\in (0,1)$. We observe that $g_1$ has no critical points in $(0,1)\times(0,1)$ as 
\begin{equation*}
    \dfrac{\partial g_1}{\partial y}=\dfrac{y(x-8)(1-x^2)(x-1)}{36}\neq 0\quad x,y\in (0,1).
\end{equation*}
\noindent \underline{On $p=2$} 
\begin{equation}
    H(2,x,y):=\dfrac{49}{1296},\quad x,y\in (0,1).\label{9.3}
\end{equation}
\noindent \underline{On $x=0$}
\begin{equation}
    g_2(p,y):=H(p,0,y)=\dfrac{49p^6+24(4-p^2) (27 p^2 + 14 p^3 y + 96 y^2 - 51 p^2 y^2)}{82944}\label{9.1}
\end{equation}
with $p\in (0,2)$ and $y\in (0,1).$ We solve $\partial g_2/\partial p$ and $\partial g_2/\partial y$ to locate the points of maxima. On solving $\partial g_2/\partial y=0,$ we obtain
\begin{equation}
    y=-\dfrac{7p^3}{3(32-17p^2)}(=:y_1).\label{y}
\end{equation}
For the given range of $y$, we should have $y_1\in (0,1),$ which is possible only if $p>p_0\approx 1.54572.$ On computations, $\partial g_2/\partial p=0$ gives
\begin{equation}
    864 p - 432 p^3 + 49 p^5 + 672 p^2 y - 280 p^4 y - 2400 p y^2 + 
 816 p^3 y^2=0.\label{9}
\end{equation}
After substituting equation (\ref{y}) in equation (\ref{9}), we get,
\begin{equation}
    1327104 - 2073600 p^2 + 1079568 p^4 - 215496 p^6 + 5243 p^8=0.\label{40}
\end{equation}
A numerical computation shows that the solution of (\ref{40}) in the interval $(0,2)$ is $p\approx 1.39637.$
Thus $g_2$ has no critical point in $(0,2)\times (0,1).$\\
\noindent \underline{On $x=1$}
\begin{equation}
    g_3(p,y):=H(p,1,y)=\dfrac{(2592+1872p^2-528p^4-p^6)}{41472}, \quad p\in (0,2).\label{9.2}
\end{equation}
and $p=p_0:\approx 1.32811$ comes out to be the critical point while computing $\partial g_3/\partial p=0.$ Undergoing simple calculations, $g_3$ achieves its maximum value $\approx 0.102376$ at $p_0.$\\
\noindent \underline{On $y=0$}
\begin{align*}
    g_4(p,x):=H(p,x,0)&=\dfrac{1}{82944}\bigg(p^6 (49 - 117 x + 75 x^2 - 27 x^3 + 18 x^4)+5184x(2-x^2)\\
    &\quad \quad\quad\quad\quad +144 p^2 (18 - 36 x + 9 x^2 + 33 x^3 + 2 x^4)\\
    &\quad \quad\quad\quad\quad-12 p^4 (54 - 93 x + 52 x^2 + 63 x^3 + 12 x^4)\bigg).
\end{align*}
On computations,
\begin{align*}
    \dfrac{\partial g_4}{\partial x}&=\dfrac{1}{82944}\bigg(p^6 (-117 + 150 x - 81 x^2 + 72 x^3)-10368 x^2 + 5184 (2- x^2)\\
    &\quad \quad\quad\quad\quad +144 p^2 (-36 + 18 x+ 99 x^2 + 8 x^3) -12 p^4 (-93 + 104 x\\
   &\quad \quad\quad\quad\quad + 189 x^2 + 48 x^3)\bigg)
\end{align*}
and \begin{align*}
    \dfrac{\partial g_4}{\partial p}&=\dfrac{p}{13824}\bigg(p^4 (49 - 117 x + 75 x^2 - 27 x^3 + 18 x^4)+48 (18 - 36 x \\
    &\quad \quad\quad\quad\quad+ 9 x^2 + 33 x^3 + 2 x^4)-8 p^2 (54 - 93 x + 52 x^2 + 63 x^3 + 12 x^4) \bigg).
\end{align*}
A numerical computation shows that there does not exist any solution for the system of equations $\partial g_4/\partial x=0$ and $\partial g_4/\partial p=0$ in $(0,2)\times (0,1).$\\
\noindent \underline{On $y=1$}
\begin{align*}
    g_5(p,x)&=\frac{1}{82944}\bigg( p^6 (49 - 117 x + 75 x^2 - 27 x^3 + 18 x^4)+1152 p x (6 + x - 6 x^2 - x^3)\\
&\quad\quad\quad\quad\quad +96 p^3 (14 - 9 x - 20 x^2 + 9 x^3 + 6 x^4) - 
 12 p^4 (-48 + 15 x + 148 x^2 \\
&\quad\quad\quad\quad\quad - 45 x^3 + 18 x^4)+144 p^2 (-32 + 18 x + 55 x^2 - 21 x^3 + 6 x^4)\\
&\quad\quad\quad\quad\quad-576 (-16 + 14 x^2 - 9 x^3 + 2 x^4)\\
&\quad\quad\quad\quad\quad-24 p^5 (14 + 9 x - 17 x^2 - 9 x^3 + 3 x^4)\bigg).
\end{align*}

\item Now, we calculate the maximum values achieved by $H(p,x,y)$ on the edges of cuboid $S$.\\ 
From equation (\ref{9.1}), we have $H(p,0,0)=s_1(p):=(49p^6+2592p^2-648p^4)/82944.$ It is easy to observe that $s_1'(p)=0$ for $p=\alpha_0:=0$ and $p=\alpha_1:=1.75123$ in the interval $[0,2]$ as points of minima and maxima respectively. The maximum value of $s_1(p)$ is $\approx 0.0393988.$ Hence, 
\begin{equation*}
    H(p,0,0)\leq 0.0393988.
\end{equation*}
Now considering the equation (\ref{9.1}) at $y=1,$ we get $H(p,0,1)=s_2(p):=(9216-4608p^2+1344p^3+576p^4-336p^5+49p^6)/82944.$ It is easy to observe that $s_2'(p)$ is a decreasing function in $[0,2]$ and hence $p=0$ acts as its point of maxima. Thus
\begin{equation*}
    H(p,0,1)\leq \dfrac{1}{9}, \quad p\in [0,2].
    \end{equation*}
Through computations, equation (\ref{9.1}) shows that $H(0,0,y)$ attains its maximum value at $y=1.$ This implies that
\begin{equation*}
    H(0,0,y)\leq \dfrac{1}{9}, \quad y\in [0,1].
\end{equation*}
Since, the equation (\ref{9.2}) is independent of $x$, we have $H(p,1,1)=H(p,1,0)=s_3(p):=(2592 + 1872 p^2-528 p^4 - p^6)/41472.$ Now, $s_3'(p)=624-352p^3-p^5=0$ when $p=\alpha_2:=0$ and $p=\alpha_3:=1.32811$ in the interval $[0,2]$ with $\alpha_2$ and $\alpha_3$ as points of minima and maxima respectively. Hence
\begin{equation*}
    H(p,1,1)=H(p,1,0)\leq 0.102376,\quad p\in [0,2].
\end{equation*}
On substituting $p=0$ in equation (\ref{9.2}), we get, $H(0,1,y)=1/16.$\\
The equation (\ref{9.3}) is independent of the all the variables namely $p$, $x$ and $y$. Thus, the maximum value of $H(p,x,y)$ on the edges $p=2, x=1; p=2, x=0; p=2, y=0$ and $p=2, y=1,$ respectively, is given by
\begin{equation*}
    H(2,1,y)=H(2,0,y)=H(2,x,0)=H(2,x,1)=\dfrac{1}{16},\quad x,y\in [0,1].
\end{equation*}
Using equation (\ref{9.4}), we obtain $H(0,x,1)=s_4(x):=(16-14x^2+9x^3-2x^4)/144.$ Upon calculations, we see that $s_4$ is a decreasing function in $[0,1]$ and hence attains its maximum value at $x=0.$ Hence
 \begin{equation*}
     H(0,x,1)\leq \dfrac{1}{9},\quad x\in [0,1].
 \end{equation*}
On again using equation (\ref{9.4}), we get $H(0,x,0)=s_5(x):=x(2-x^2)/16.$ On further calculations, we get $s_5'(x)=0$ for $x=x_0:=\sqrt{2}/\sqrt{3}.$ Also, $s_5(x)$ is an increasing function in $[0,x_0)$ and decreasing in $(x_0,1].$ So, it attains its maximum value at $x_0.$ Thus
 \begin{equation*}
     H(0,x,0)\leq 0.0680413,\quad x\in [0,1].
 \end{equation*}
\end{enumerate}
In view of all the cases, the inequality (\ref{9.5}) holds.\\
The function defined in equation (\ref{extremal}) acts as an extremal function for the bounds of $|H_3(1)|$ for the values of $a_2=a_3=a_5=0$ and $a_4=1/3.$
\end{proof}

	%
\subsection*{Acknowledgment}
Neha Verma is thankful to the Department of Applied Mathematics, Delhi Technological University, New Delhi-110042 for providing Research Fellowship.

\end{document}